\def\thm@space@setup{%
  \thm@preskip=\parskip \thm@postskip=0pt
}
\numberwithin{equation}{section}
\newcommand{\td}{\mathrm{d}}
\newcommand{\tnu}{\tilde{\nu}^{C,R}_{N,\alpha}}
\newcommand{\nnu}{\nu^{R}_{N,\alpha}}
\newcommand{\cv}{\text{Cov}}
\newcommand{\hs}{\hat \sigma}
\newcommand{\hr}{\hat \rho}
\newcommand{\hx}{\hat \eta}
\newtheorem{thm}{Theorem}[section]
\newtheorem{lm}[thm]{Lemma}
\newtheorem{cor}[thm]{Corollary}
\theoremstyle{definition}
\newtheorem{defn}[thm]{Definition}
\newtheorem{ass}[thm]{Assumption}
\begin{document}
\lipsum[0]
\title{Spin Distributions for Generic Spherical Spin Glasses}

 \author{Arka Adhikari}
 \address{Harvard University}
 \email{adhikari@math.harvard.edu}

\begin{abstract}
    This paper investigates p-spin distributions for a generic spherical p-spin model; we give a representation of spin distributions in terms of a stochastic process. In order to do this, we find a novel double limit scheme that allows us to treat the sphere as a product space and perform cavity computations. The decomposition into a product space involves the creation of a renormalized sphere, whose scale $R$ will be taken to infinity.
\end{abstract}
\maketitle
\section{Introduction}

Spin glasses are used as models for understanding the properties of disordered magnets, where, unlike ferromagnetic and paramagnetic models, the directions of the magnetic fields are oriented in arbitrary directions. In 1975, Sherrington and Kirkpatrick proposed a mean field model in order to gain a foothold on some of the more computational aspects of problem.\cite{SKMod} The SK Hamiltonian is
\begin{equation}
    H_N(\sigma) = \frac{1}{\sqrt{N}} \sum_{i,j=1}^N J_{ij} \sigma_i \sigma_j
\end{equation}
where $\sigma=(\sigma_1,\ldots, \sigma_j) \in \{-1,1\}^N$ ,$J_{ij}$ i.i.d. $N(0,1)$.

The study of the Sherrington-Kirkpatrick model revealed multiple physically novel aspects of spin glasses that are not present in other magnetic models. The Parisi Formula for the free energy \cite{Parisi1, Parisi2} showed that the limiting overlap distribution $\zeta(\cdot \in [a,b])=\lim_{N\rightarrow \infty}\mathbb{P}(\langle \sigma^1,\sigma^2 \rangle \in [a,b])$   where $\sigma^1$ and $\sigma^2$ are two distinct replicas distributed according to the Gibbs' measure from $H_N$ is an important order parameter.

An interpretation of the Parisi formula by M. Mezard, et al. \cite{Mezard1,Mezard2,Mezard3}, led to an interpretation of the Gibbs' measure as being divided into pure states which organize in a hierarchical structure satisfying ultrametricity. The Parisi formula was proved by Talagrand in \cite{TalagrandParisi} while ultrametricity for generic models was established by Panchenko in \cite{PanchenkoUltra}.

Though the study of the free energy gave many insights into the physical nature of the Gibbs' measure, the free energy to highest order by itself is not able to give very fine details about the Gibbs' distribution. In this paper, we would like to consider the question of spin distributions; this means we will look at the behavior of individual coordinates $\sigma_i^l$.

Such information may be gleaned if one has access to higher order corrections to the free energy formula. In general, this is a difficult problem and requires very precise analysis of explicit formulas as in \cite{Subag,SubagZei}. For the question of spin distributions on the hypercube, Panchenko found a series of invariance principles called the cavity equations \cite{PanchenkoSpin} that allows one to compute individual spin distributions under the assumption of finite replica symmetry breaking. For generic models, Auffinger and Jagganath \cite{JaAuSpin} applied these cavity equations for the hypercube and took a limit procedure to study the spin distributions of models that satisfy full replica symmetry breaking. They found a representation of the p-spin distributions on the hypercube in terms of a stochastic process defined on the support of the limiting Gibbs' measure.

In this paper, we investigate the asymptotic spin distributions of a generic Hamiltonian on the sphere $S_N(N)$ where $S_k(r)$ denotes the k-dimensional sphere of radius $\sqrt{r}$.
The Hamiltonian is defined via its covariance structure. For two points $\sigma=(\sigma_1,\sigma_2,\ldots,\sigma_N)$ and $\rho=(\rho_1,\rho_2,\ldots,\rho_N)$ we have the following Hamiltonian $H_N$
\begin{equation}\label{eq:defHam}
    \text{Cov}(H_N(\sigma),H_N(\rho)) = N \xi(R_{12})
\end{equation}
where the overlap $R_{12}$ between $\sigma$ and $\rho$ is given by
$$
R_{12} = \frac{1}{N} \sum_{i=1}^N \sigma_i \rho_i
$$

If we want to write out our Hamiltonian in terms of its p-spin components, we would have
\begin{equation}\label{eq:defpspinHam}
    \sum_{p=2}^{\infty} \beta_p \frac{1}{N^{(p-1)/2}}\sum_{i_1,\ldots,i_p} g_{i_1,\ldots,i_p}\sigma_{i_1}\sigma_{i_2}\ldots \sigma_{i_p}
\end{equation}
We would then see that the function $\xi$ appearing in \eqref{eq:defHam} can be written as
\begin{equation}
    \xi(x) = \sum_{p=2}^{\infty} \beta_p^2 x^p
\end{equation}

We have the following assumptions on the Hamiltonian $H_N$
\begin{ass}\label{asmp:Ham}

\begin{itemize}
        \item We have the Hamiltonian $H_N$ is a generic model. This allows us to apply Panchenko's Ultrametricity Theorem \cite{Panchenkobook}[Col 3.2,Thm 2.14]and establish uniqueness of the overlap distribution.
    \item We have sufficient decay on the coefficients $\beta_p$; namely, we will assume that $\beta_p \le \frac{1}{p^2}$. This will later allow us to perform perturbation theory carefully.
\end{itemize}
\end{ass}
For the spherical model, we no longer have the cavity equation, so we require a new  invariance principle that will allow us to get a representation of the spin distributions.

\subsection{Main Results}

We quickly review the necessary definitions from the theory of spin distributions as from \cite{Panchenkobook}[Chapter4]. Let $G_N$ be the Gibbs' measure corresponding to the Hamiltonian $H_N$ and let $(\sigma^l)$ be a series of replicas i.i.d. distributed according to $G_N$.  Consider the spin matrix $(S^N)=(\sigma^l_i)_{1\le i\le N, 1 \le l}$ and the overlap matrix $[R^N]_{l,l'} = \langle\sigma^l, \sigma^{l'}\rangle$; we extend $S^N$  to be an $\infty \times \infty$ matrix by setting $\sigma^l_j =0$ for $j >N$. We let $\mu_N$ be the law of $S^N$ and $\eta_N$ be the distribution of the overlap matrix $[R^N]_{l,l'}$  distributed according to $\mathbb{E}(G_N^{\otimes \infty})$. We will say that $\mu$( respectively $\eta$ )is a limiting spin (respectively overlap) distribution if it is a subsequential limit of some $\mu_N$ (respectively $\eta_N$)  in the sense of convergence of finite dimensional distributions.

Notice that $\eta$ is a weakly exchangeable distribution of positive semidefinite matrices. We can use the Dobvysh-Sudakov theorem \cite{DobSud} in order to say there is a random measure $\nu$ on $L^2[0,1] \times \mathbb{R}$ satisfying the following property. The distribution of the overlap matrix $(R_{l,l'})$ from $\eta$ is the same as the distribution of $(a_l \delta_{l,l'} + <h_l,h_l'>)$ where $(a_l,h_l)$ are sampled i.i.d from $\nu$ with respect to the randomness of $\nu$. The measure $\nu$ restricted to the space $L^2[0,1]$ is called an asymptotic Gibbs' measure for the spin glass.

For generic p-spin models, we can say more about the distribution $\eta$. Because of the Ghirlanda-Guerra Identities, it is known that the law of the distribution of the entire overlap matrix is a function of the law of the overlap distribution $R_{1,2}$ \cite{Panchenkobook}[Thm 2.13]. In addition, for generic p-spin models, the limiting distribution of $R_{1,2}$ is unique.

     We can give a more explicit description of the asymptotic measure $\mu$ in terms of a stochastic process.
     We will use much of the notation from \cite{JaAuSpin}.

          Consider $q_*>0$ and let $U$ be a positive  ultrametric subset of the sphere of radius $\sqrt{q_*}$ in the space $L^2[0,1]$. We define a Brownian motion on $U$; consider the Gaussian process $B_t(\sigma)$ indexed by $(t,\sigma) \in [0,q_*]\times U$ which is centered, a.s. continuous in time and in space with covariance
          \begin{equation} \label{eq:BrownMot}
              \text{Cov}(B_t(\sigma_1), B_t(\sigma_2)) = (t_1 \wedge t_2) \wedge (\sigma_1,\sigma_2)
          \end{equation}

        Using this Brownian motion, we can define the cavity field $Y_t(\sigma)$ on $U$ as the solution of the following SDE
        \begin{equation} \label{def:cavfield}
        \begin{aligned}
            & \td \mathcal{Z}_t(\sigma) = \sqrt{\xi''(t)}\td B_t(\sigma)\\
            & \mathcal{Z}_0(\sigma)=0
        \end{aligned}
        \end{equation}

     Let $\zeta$ be the Parisi measure for the generic p-spin model.
     Consider the following Parisi initial value problem on $(0,1) \times \mathbb{R}$
     \begin{equation}
         \begin{aligned}
             & u_t + \frac{\xi''(t)}{2}(u_{xx} + \zeta([0,t]) u_x^2)=0\\
             & U(1,x) = \frac{x^2}{2(1+S_{\zeta})}
         \end{aligned}
     \end{equation}
     where $S_{\zeta} = \int_{0}^{1} \zeta([0,l])(l \xi''(l) + \xi'(l)) \td l$.

     We can now define the local field $X_t(\sigma)$ on $[0,q_*] \times U$ by an SDE
     \begin{equation} \label{def:locfield}
     \begin{aligned}
         & \td \mathcal{X}_t(\sigma) = \xi''(t) \zeta([0,t]) u_x(t,\mathcal{X}_t(\sigma))\td t + \td \mathcal{Z}_t(\sigma) \\
         & \mathcal{X}_0(\sigma)=0
    \end{aligned}
     \end{equation}

     Many basic properties of these stochastic processes are covered in section A.1 of \cite{JaAuSpin}.
     With these preliminaries established, we can use these stochastic processes to create probability distributions for spins.

     We define the measure $\rho^f_{\zeta}$ where $\sigma \in U$ and $f \in \mathbb{R}$ as follows. Let $g$ be a bounded function on $\mathbb{R}$ with compact support.
     \begin{equation} \label{eq:defrho}
         \rho^f_{\zeta}(g) = \frac{\int_{-\infty}^{\infty}\int_{-\infty}^{\infty}g(s) e^{-\frac{[1+S_\zeta]}{2} s^2} e^{-\frac{(y-f)^2}{2[\xi'(1) - \xi'(q)]}} e^{sy} \td y \td s}{\int_{-\infty}^{\infty}\int_{-\infty}^{\infty} e^{-\frac{[1+S_\zeta]}{2} s^2} e^{-\frac{(y-f)^2}{2[\xi'(1) - \xi'(q)]}} e^{sy} \td y \td s}
     \end{equation}

     We will establish the following theorem
     \begin{thm} \label{them:mainthm}
     Consider a spherical Hamiltonian $H_N$ satisfying the assumptions of Assumption \ref{asmp:Ham}. Let $\zeta^*(\cdot)= \lim_{N \rightarrow \infty} \mathbb{E}\langle R_{1,2} \in \cdot\rangle$ be the limiting overlap distribution and let $q_*= \sup\text{supp}(\zeta^*)$. Let $\mu$ be an asymptotic spin distribution and $\nu$ be an asymptotic Gibbs' measure corresponding to $H_N$.

     We define the following measure on $\infty \times  \infty$ matrices: choose $\{\sigma^l\}$ i.i.d from $\nu^{\otimes \infty}$. Independently construct $\mathcal{X}^i_t(\sigma)$ distributed according to \eqref{def:locfield} on $U = \text{supp}(\nu)$. Distribute $S_i^l$ according to the measure $\rho^{\mathcal{X}^i_{q_*}(\sigma^l)}_{\zeta^*}$.

     Let $(s_i^l)$ be spins distributed according to $\mu$. Then $(s^i_l)$ and $(S^i_l)$ are equal in distribution.
     \end{thm}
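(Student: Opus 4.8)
The plan is to follow the double-limit cavity strategy outlined in the abstract: approximate the sphere $S_N(N)$ as a product of renormalized spheres, perform cavity computations on the product structure where the coordinates decouple, and then take the renormalization scale $R\to\infty$ followed by $N\to\infty$. First I would set up the renormalized measures $\tilde\nu^{C,R}_{N,\alpha}$ and $\nu^R_{N,\alpha}$ referenced in the macros: the idea is that a point on $S_N(N)$ can be written (up to a small correction) as a concatenation of $N/R$ independent blocks each living on a scaled copy of $S_R(R)$, plus a Gaussian-like ``radial'' fluctuation. On this product space the Hamiltonian's covariance $N\xi(R_{12})$ factorizes approximately across blocks, so one recovers an honest cavity equation of the Aizenman--Sims--Starr / Panchenko type. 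This is the step where Assumption \ref{asmp:Ham} (the decay $\beta_p\le p^{-2}$) is used: it controls the error in expanding $\xi$ across the block decomposition and makes the perturbation theory converge uniformly in $R$.

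Next I would run the cavity computation itself. Adding one spin coordinate $s$ to the configuration, its conditional law given the rest of the system is governed by an effective one-dimensional Hamiltonian whose ``external field'' is the cavity field at that site. Because we are on a (renormalized) sphere rather than the hypercube, the spherical constraint contributes the Lagrange-multiplier factor $e^{-\frac{[1+S_\zeta]}{2}s^2}$, and the cavity field contributes the Gaussian weight and the coupling term $e^{sy}$; together these produce exactly the measure $\rho^f_\zeta$ of \eqref{eq:defrho}, with $f$ the value of the cavity/local field at that coordinate. The constant $S_\zeta=\int_0^1\zeta([0,l])(l\xi''(l)+\xi'(l))\,dl$ arises as the limit of the derivative of the spherical free energy (the Crisanti--Sommers correction), and the Parisi PDE for $u$ together with the SDE \eqref{def:locfield} for $\mathcal X_t$ is the standard way of tracking how the cavity field evolves as one peels off coordinates at decreasing overlap scales; its construction and basic properties are imported from \cite{JaAuSpin}. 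Here I would use the Ghirlanda--Guerra identities and uniqueness of the overlap distribution for generic models to pin down $\zeta^*$ and to guarantee the limiting spin/overlap distribution is well defined along the full sequence.

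The two limits then need to be interchanged carefully. For fixed $R$, the $N\to\infty$ limit gives a spin distribution built over the asymptotic Gibbs' measure of the renormalized model; one must check that as $R\to\infty$ the renormalized sphere ``converges'' to the original sphere at the level of overlap distributions (so $\zeta^{*,R}\to\zeta^*$ and $U^R\to U$ in the appropriate sense), and that the local field $\mathcal X^R_{q_*}$ converges to $\mathcal X_{q_*}$. Continuity of $\rho^f_\zeta$ in $f$ and in $\zeta$ (clear from \eqref{eq:defrho}, since it is a ratio of Gaussian integrals depending smoothly on the parameters) then lets me pass the limit through and identify the limiting law of $(S_i^l)$ with the claimed one. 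Finally, uniqueness of the asymptotic spin distribution $\mu$ — inherited from uniqueness of the overlap distribution and the Dovbysh--Sudakov / Aldous--Hoover representation — promotes the subsequential statement to the full equality in distribution $(s_i^l)\overset{d}{=}(S_i^l)$.

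The main obstacle I expect is the justification of the double-limit scheme itself: showing that the product-space approximation of the sphere is accurate enough that the cavity manipulations valid on $\left(S_R(R)\right)^{N/R}$ survive the $R\to\infty$ limit without the errors (which are $O(1/R)$ per block but there are $N/R$ blocks) accumulating. Controlling these errors uniformly — and in particular showing that the spherical constraint only contributes the single Gaussian factor $e^{-\frac{[1+S_\zeta]}{2}s^2}$ in the limit rather than some more complicated correlated correction across coordinates — is the technical heart of the argument, and is exactly where the coefficient decay assumption and the careful perturbation theory are needed.
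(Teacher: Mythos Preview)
Your decomposition is the wrong one. The paper does \emph{not} write $S_N(N)$ as a product of $N/R$ blocks of size $R$; rather, it peels off only the last $n$ cavity coordinates, restricts them to $[-C,C]$, and rescales the remaining $N$ coordinates to the single sphere $S_N(N-nR^2)$ via $\hat\sigma_i = \sigma_i\sqrt{N-nR^2}/\sqrt{N+n-\sigma_{N+1}^2-\cdots-\sigma_{N+n}^2}$. On this product $S_N(N-nR^2)\times[-C,C]^n$ the Hamiltonian splits into three pieces: a base Hamiltonian $\hat H_N$ on the rescaled sphere, an independent local-field term $Z^i(\hat\sigma)$ for each cavity coordinate, and a \emph{renormalization term} of the form $\big[\sqrt{R^2+1}-\tfrac{s_i^2}{2\sqrt{R^2+1}}\big]Y^i(\hat\sigma)$ which encodes how the energy of the first $N$ spins shifts with the radius $\sqrt{N-s^2}$. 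This renormalization term is coupled to $\hat H_N$; the whole point of $R$ is that replacing it by an \emph{independent} Gaussian process costs only $O(C^4/R^2)$ in covariance (Theorem \ref{thm:Rreplacement}), uniformly in $N$. So there is no accumulation of block errors of the kind you worry about---there is one cavity extraction, not $N/R$ of them.

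Consequently the limit order is also different from what you propose: one takes $N\to\infty$ first (obtaining a functional $F^{R,C,E}(\zeta^*)$ of the overlap law via Ghirlanda--Guerra), then $R\to\infty$, then $C\to\infty$, with uniformity in $\zeta$ established so that one may interpose a finite-RSB approximation $\zeta_i\to\zeta^*$ and do explicit RPC computations. The factor $e^{-\frac{[1+S_\zeta]}{2}s^2}$ does not come from a Crisanti--Sommers free-energy derivative; it emerges when one applies the Bolthausen--Sznitman invariance to the RPC tilt $w_\alpha\mapsto w_\alpha e^{\sqrt{R^2+1}\,Y(h_\alpha)}$, which shifts the mean of $Y$ by exactly $\sqrt{R^2+1}$ times the quantity that becomes $\hat S_\zeta$, so that the growing $\sqrt{R^2+1}$ cancels against the $1/\sqrt{R^2+1}$ in the remaining $Y$-dependence and leaves only the variance renormalization. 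Your proposal is missing this mechanism entirely, and the block picture you sketch would not produce it.
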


 We will give a brief overview of the main strategy of this paper. As in the paper \cite{JaAuSpin} for the hypercube, one has a set of invariance principles called the cavity equations \cite{PanchenkoSpin} that gives a useful representation of spin distributions. It involves treating the $N$ dimensional hypercube as a product of the $N-n$ and $n$ dimensional hypercubes. For $n \ll N$, we can split the $N$ dimensional Hamiltonian as a Hamiltonian on $N-n$ dimensional space and an independent perturbation Hamiltonian for the last $n$ particles. Due to this independence, one can explicitly compute the spin distribution for the last $n$ particles and show that it is a function of the overlap distribution of the spin glass.

 However, the sphere is not naturally represented as a product space and it is not immediately clear how one can apply a cavity computation. Our main innovation here is to find a limiting scheme by which we can interpret the sphere $S_N(N)$ as a product set of a sphere of smaller dimension and to find an appropriate splitting of the Hamiltonian into a main term and preturbative part.

To define the splitting, we first fix a renormalization scale $R$ for the first $N-1$ particles, $\hat \sigma_1^2+ \ldots + \hat \sigma_{N-1}^2 = N -R^2$ with the map that $\sigma_1 = \frac{\sqrt{N- \sigma_N^2}}{\sqrt{N-R^2}}  \hat \sigma_1$. This way the Hamiltonian for the sphere on $N$-particles can be treated as a Hamiltonian on the product of a sphere of $N-1$ particles on renormalized scale $N-R^2$ and $\mathbb{R}$ for the final particle.

When treating the sphere as a product set as we have described earlier, the Hamiltonian decomposes into three parts. The first is the standard spherical Hamiltonian on $N-1$ particles. The second is a local field term using those terms in $H_N$ that explicitly used the spin $\sigma_N$. The third is a novel renormalization term that represents the rescaling of energy levels due to the term $ \frac{\sqrt{N- \sigma_N^2}}{\sqrt{N-R^2}}$ on the $N-1$ sphere Hamiltonian.

This new renormalization term and the Hamiltonian on $N-1$ particles are coupled to each other. For the purposes of computation, we approximate this new renormalization term as a Gaussian process independent of the base Hamiltonian on $N-1$ particles. What we observe is that this error from treating this term as an independent perturbation becomes smaller as the renormalization radius $R \rightarrow \infty$. Ultimately, the main effect of this renormalization is to change the variance of the Gaussian spin of the last particle on the sphere.

Once we have established the cavity equations in this way, we are able to represent the spin distributions in terms of our stochastic process as in Theorem \ref{them:mainthm}. We first show this equality under the replica symmetry breaking assumption and show the result for all limiting overlap distributions by taking an appropriate limit.

\section*{Acknowledgments}
The author is grateful to Aukosh Jagannath for introducing him to this problem and for useful discussions.
\section{The Cavity Equation for the Sphere}

As we have described in the introduction, the first step to proving the Theorem \ref{them:mainthm} is to first prove a version of the cavity equation. To this end, we need to create an auxiliary space so that the auxiliary space is represented as a product space and so we have a natural notion of a cavity equation.
\subsection{Construction of the auxiliary Hamiltonian $\tilde{H}_N$}
Our first step in constructing the auxiliary space is to restrict the sphere so that the final $n$ coordinates lie in a compact interval.
\begin{equation}
    S_{N+n}^{C,n} = \{(\sigma_1,\ldots, \sigma_N,\sigma_{N+1},\ldots,\sigma_{N+n}): \sigma_1^2+ \ldots + \sigma_N^2 + \sigma_{N+1}^2 + \ldots \sigma_{N+n}^2 = N+n, |\sigma_{N+1}|,\ldots |\sigma_{N+n}| \le C )\}
\end{equation}
with Hamiltonian $H_{N+n}$.

We begin with the following remark
\begin{lm} \label{lm:Creplace}
\begin{equation}
    \lim_{N \rightarrow \infty}\mathbb{E}\left[\prod_{l=1}^{k}\langle (\sigma^{l}_{N+1})^{e_{l,1}} \ldots (\sigma^{l}_{N+n})^{e_{l,n}}\rangle_{H_{N+n}(S_{N+n}(N+n))} \right] = \lim_{C \rightarrow \infty} \lim_{N\rightarrow \infty}\mathbb{E}\left[\prod_{l=1}^{k} \langle (\sigma_{N+1}^l)^{e_{l,1}}\ldots (\sigma_{N+n}^l)^{e_{l,n}}\rangle_{H_{N+n}(S_{N+n}^{C,n})}\right]
\end{equation}
where $H_{N+n}(A)$ designates the Gibbs' average of the Hamiltonian $H_{N+n}$ restricted to the set $A$.
Here,the $l$ is a superscript designating the replica index while $e_{l,n}$ is the exponent computed.
\end{lm}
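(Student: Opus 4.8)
The plan is to reduce the spherical Gibbs average to its restriction on the compact slab $S_{N+n}^{C,n}$ by controlling the measure of the complementary event, and then to exchange the order of the limits in $N$ and $C$. First I would fix $n$ and $k$ and observe that the expectation on the left is bounded uniformly: since on the full sphere $S_{N+n}(N+n)$ each coordinate $\sigma^l_j$ has, under $G_{N+n}$, subgaussian-type concentration at scale $O(1)$ (uniformly in $N$, by the standard a priori estimates for spherical spin glasses — e.g. the fact that $\langle \sigma_j^{2m}\rangle$ is bounded uniformly in $N$ for each fixed $m$), the integrand $\prod_{l=1}^k \langle (\sigma^l_{N+1})^{e_{l,1}}\cdots(\sigma^l_{N+n})^{e_{l,n}}\rangle$ is integrable with a bound depending only on the $e_{l,j}$'s. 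This uniform integrability is what will let me pass limits through.

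Next I would compare the constrained and unconstrained Gibbs measures. Writing $p_{N,C}:=G_{N+n}\big(\max_{1\le j\le n}|\sigma_{N+j}|>C\big)$ for the Gibbs probability (at fixed disorder) that some of the last $n$ coordinates escapes the slab, the restricted average $\langle\,\cdot\,\rangle_{H_{N+n}(S_{N+n}^{C,n})}$ differs from $\langle\,\cdot\,\rangle_{H_{N+n}(S_{N+n}(N+n))}$ by an amount controlled by $p_{N,C}$ together with the tail contribution of the excluded region. Concretely, for a bounded test function this difference is $O(p_{N,C})$, and for the polynomial observables here one uses Cauchy–Schwarz: the difference is bounded by $\big(\text{2nd moment of the observable}\big)^{1/2}\, p_{N,C}^{1/2}$, with the second moment uniformly bounded as above. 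So it suffices to show $\mathbb{E}\, p_{N,C}\to 0$ as $C\to\infty$, uniformly in $N$ (or at least that $\limsup_N \mathbb{E}\,p_{N,C}\to 0$). This in turn follows from a uniform moment bound: $\mathbb{E}\langle (\sigma_{N+j}^l)^{2}\rangle \le K$ independent of $N$, whence $\mathbb{E}\,p_{N,C}\le n K / C^2$ by Markov applied inside the Gibbs average and then in expectation. Feeding this into the Cauchy–Schwarz estimate gives that both sides agree up to an error $O(C^{-1})$ uniform in $N$, which is exactly what lets the iterated limit on the right equal the single limit on the left.

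The one genuine subtlety — and the step I expect to be the main obstacle — is the legitimacy of writing the left-hand side as $\lim_{N\to\infty}$ at all, i.e.\ that these spin-moment averages converge along the full sequence rather than merely subsequentially, and likewise that the double limit $\lim_{C\to\infty}\lim_{N\to\infty}$ on the right exists. For the paper's purposes this is handled by passing to a subsequence along which $\mu_N$ (and hence all finite-dimensional spin moments) converge, which is exactly the setup in the statement of Theorem \ref{them:mainthm} where $\mu$ is declared to be a subsequential limit; along such a subsequence both inner limits exist, and the uniform-in-$N$ error bound $O(C^{-1})$ derived above forces the outer limit in $C$ to reproduce the subsequential limit of the unconstrained averages. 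So the clean way to phrase the proof is: (i) establish the uniform second-moment bound for $\sigma_{N+j}$; (ii) deduce $\mathbb{E}\,p_{N,C}\le nK/C^2$; (iii) use Cauchy–Schwarz to bound the discrepancy between constrained and unconstrained averages by a quantity that is $O(C^{-1})$ uniformly in $N$; (iv) take $N\to\infty$ along the relevant subsequence and then $C\to\infty$.

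One remark on where the uniform second-moment bound comes from, since everything hinges on it: for the spherical model, $\langle |\sigma_{N+j}|^2\rangle$ is controlled by a one-dimensional marginal computation after disintegrating the sphere $S_{N+n}(N+n)$ over the value of $\sigma_{N+j}$; the effective potential in the remaining variable is a perturbation of the Hamiltonian on $S_{N+n-1}$ of the renormalized radius, and the quadratic confinement from the spherical constraint (the Jacobian factor $(1-\sigma_{N+j}^2/(N+n))^{(N+n-3)/2}$, which behaves like a Gaussian of $O(1)$ variance in $\sigma_{N+j}$) dominates the $O(1)$-scale fluctuation of the Hamiltonian; a Borell–TIS bound on the latter then yields $\mathbb{E}\langle\sigma_{N+j}^2\rangle\le K$ with $K$ independent of $N$ and of $j$. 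I would state this as a short auxiliary lemma and cite the analogous a priori estimates already available in the spherical spin glass literature rather than reproving them.
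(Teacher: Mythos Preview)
Your proposal is correct and follows essentially the same approach as the paper: the paper's proof is a one-line sketch that simply asserts the spins $\sigma_{N+1},\ldots,\sigma_{N+n}$ have subgaussian tails and hence the contribution from outside $S_{N+n}^{C,n}$ vanishes as $C\to\infty$. Your version is a considerably more careful fleshing-out of exactly this idea---supplying the uniform moment bound, the Markov/Cauchy--Schwarz control of the discrepancy, and the discussion of exchanging limits---none of which the paper spells out.
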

\begin{proof}
Each of the spins $\sigma_{N+1},\ldots,\sigma_{N+n}$ have subgaussian tails. The contribution to the Gibbs' average from spins that lie outside the set $S_{N+n}^{C,n}$ becomes increasingly small as $C$ gets large.
\end{proof}
 In order to construct the auxiliary Hamiltonian, we would like to consider $S_{N+n}^{C,n}$ as a product set.
\begin{equation}
    S_{N+n}^{C,R,n}= \{(\hat \sigma_1,\ldots, \hat \sigma_N): \hat \sigma_1^2 + \ldots + \hat \sigma_N^2 = N - nR^2\} \times \{|\hat \sigma_{N+1}|,\ldots,|\hat \sigma_{N+n}| \le C\}
\end{equation}

    The map between $S_{N+n}^{C,n}$ and $S_{N+n}^{C,R,n}$ is as follows:
    \begin{equation}
        \hat \sigma_i := \frac{\sigma_i \sqrt{N-nR^2}}{\sqrt{N+n - \sigma_{N+1}^2-\ldots -\sigma_N^2}},  1\le i \le N
    \end{equation}
    and $\hat \sigma_{N+i} := \sigma_{N+i}$ for $1 \le i \le n$.
    We will denote the factor $\frac{\sqrt{N+n - \sigma_{N+1}^2- \ldots - \sigma_N^2}}{\sqrt{N-nR^2}}$ as $S$.
    Notice that the first part of the decomposition of $S_{N+n}^{C,R,n}$ is the sphere $S_N(N-nR^2)$

    When defined in terms of the new variables $\hat \sigma_i$, we will be able to write the Hamiltonian $H_{N+n}$ on the set $S_{N+n}^{C,R,n}$ as
    \begin{equation}
        H_{N+n}(\hat \sigma_1,\ldots, \hat \sigma_{N+n})=\sum_{p=2}^{\infty} \beta_p \frac{1}{(N+n)^{(p-1)/2}} \sum_{k=0}^{n} \sum_{ i_1,\ldots i_p:|\{i_1,\ldots,i_p\}\cap\{N+1,\ldots, N+n\}|=k} g_{i_1,i_2,\ldots,i_p} S^{p-k} \hat \sigma_{i_1}\ldots \hat \sigma_{i_p}
    \end{equation}

    Ideally, we would like to write the above Hamiltonian as a Hamiltonian on the sphere $S_N(N-R^2)$ plus a perturbation term. Namely, we have
    \begin{equation}
    \begin{aligned}
        & H_{N+n}(\hat \sigma_1,\ldots, \hat \sigma_{N+n}) = \sum_{p=2}^{\infty} \beta_p \frac{1}{(N+n)^{(p-1)/2}}  \sum_{1 \le i_1\ldots i_p \le N}g_{i_1,i_2,\ldots,i_p}\hat \sigma_{i_1}\ldots \hat \sigma_{i_p}
        \\
        &+ \sum_{p=2}^{\infty} \beta_p \frac{1}{(N+n)^{(p-1)/2}}  \sum_{1\le i_1\ldots i_p \le N }g_{i_1,i_2,\ldots,i_p}\hat \sigma_{i_1}\ldots \hat \sigma_{i_p}(S^p -1)
        \\&+ \sum_{p=2}^{\infty} \beta_p \frac{1}{(N+n)^{(p-1)/2}} \sum_{i_1\ldots i_p:|\{i_1,\ldots,i_p\}\cap\{N+1,\ldots, N+n\}|=1}g_{i_1,i_2,\ldots,i_p}\hat \sigma_{i_1}\ldots \hat \sigma_{i_p}(S^{p-1})\\
        &+ \sum_{p=2}^{\infty} \beta_p \frac{1}{(N+n)^{(p-1)/2}} \sum_{k=2}^n \sum_{i_1\ldots i_p:|\{i_1,\ldots,i_p\}\cap\{N+1,\ldots, N+n\}|=k}g_{i_1,i_2,\ldots,i_p}\hat \sigma_{i_1}\ldots \hat \sigma_{i_p}(S^{p-k})
    \end{aligned}
    \end{equation}

    We make some remarks about the above decomposition; the term on the first line is of order $N$ and is easily seen to be a Hamiltonian defined on the sphere $S_N(N-R^2)$ with coordinates $\hat \sigma_1,\ldots, \hat \sigma_N$. The term on the second line will be a term of order 1, but is coupled to the term on the first line.

    The terms on the third line are additionally of order 1 and independent of the terms on the first two; notice that each of these terms will only involve exactly 1 of the cavity coordinates $\hat \sigma_1,\ldots, \hat \sigma_N$.

    The terms on the fourth line are independent of those that have come before and have variance of order $1/N$. We will be able to treat these terms as errors.

    The issues that arise in computing distributional equivalences come from the second term in the above expression; it is coupled with the main Hamiltonian on the sphere $S_N(N -R^2)$. In order to perform computations in the future,what we would like to do is instead replace this term with one that is independent of the main Hamiltonian on the first line. Namely, we would like to consider
    \begin{equation}\label{eq:AuxHam}
        \begin{aligned}
        &\tilde H_{N+n}(\hat \sigma_1,\ldots, \hat \sigma_{N+n}) = \sum_{p=2}^{\infty} \beta_p \frac{1}{(N+n)^{(p-1)/2}}  \sum_{1\le i_1\ldots i_p\le N}g_{i_1,i_2,\ldots,i_p}\hat \sigma_{i_1}\ldots \hat \sigma_{i_p}
        \\ &+ \sum_{p=2}^{\infty} \beta_p \frac{1}{(N+n)^{(p-1)/2}}
        \sum_{1 \le i_1\ldots i_{p-1} \le N} \sum_{i=1}^n g^{N+i}_{i_1,i_2,\ldots,i_{p-1}} \frac{\sqrt{R^2+1} - \frac{\hat\sigma_{N+i}^2}{2\sqrt{R^2+1}}}{\sqrt{N+n}} \hat \sigma_{i_1}\ldots \hat \sigma_{i_{p-1}} \\
        & + \sum_{p=2}^{\infty} \beta_p \frac{1}{(N+n)^{(p-1)/2}} \sum_{i_1\ldots i_p:|\{i_1,\ldots,i_p\}\cap\{N+1,\ldots, N+n\}|=1}g_{i_1,i_2,\ldots,i_p}\hat \sigma_{i_1}\ldots \hat \sigma_{i_p}
        \end{aligned}
    \end{equation}
    where the $g^{N+i}_{i_1,\ldots,i_{p-1}}$ are $N(0,1)$ Gaussian random variables independent of all other randomness. Notice also that in the local field term, we have dropped the rescaling term $S^{p-1}$ as we will eventually be able to show, provided sufficient decay of the $\beta_p$, that $S^{p-1} -1$ will lead to a term that is of smaller order. Along the same line, we have removed the fourth line of error terms, anticipating that they will eventually be shown to be insignificant.

    For simplicity of notation later, we will denote
    \begin{equation} \label{Eq:hath}
        \hat H_N = \sum_{p=2}^{\infty} \beta_p \frac{1}{(N+n)^{(p-1)/2}}  \sum_{1\le i_1\ldots i_p\le N}g_{i_1,i_2,\ldots,i_p}\hat \sigma_{i_1}\ldots \hat \sigma_{i_p}
    \end{equation}
    \begin{equation}\label{eq:Y}
        Y^i(\hat \sigma):= \sum_{p=2}^{\infty} \beta_p \frac{1}{(N+n)^{p/2}} \sum_{1\le i_1,\ldots,i_{p-1} \le N} g^{N+i}_{i_1,\ldots,i_{p-1}} \hat \sigma_{i_1} \ldots \hat \sigma_{i_{p-1}}
    \end{equation}
    \begin{equation}\label{eq:Z}
        Z^i(\hat \sigma):= \sum_{p=2}^{\infty} \beta_p \frac{1}{(N+n)^{p/2}} \sum_{i_1\ldots i_p:\{i_1,\ldots,i_p\} \cap\{N+1,\ldots, N+n\}=\{N+i\}} g_{i_1,\ldots,i_p} \prod_{k:i_k \ne N+i}\hat{\sigma}_{i_k}
    \end{equation}
     This is a manifest shorthand for the three lines in the decomposition of \eqref{eq:AuxHam}

    In order to justify the replacement of the Gaussian process $H_{N+n}$ with $\tilde H_{N+n}$, we compare the variances for the two processes.

    We have the following equation provided one assumes sufficient decay of the $\beta_p$ terms
    \begin{equation}
    \begin{aligned}
        & \text{Cov}(H_{N+n}(\hat \sigma), H_{N+n}(\hat \rho))= (N+n) \sum_{p=2}^{\infty} \beta_p^2 R_{\hat \sigma,\hat\rho}^p + \sum_{p=2}^{\infty} \frac{p}{2}\beta_p^2 (nR^2 +n - \hat \sigma_{N+1}^2 - \ldots - \hat \sigma_{N+n}^2) R_{\hat \sigma,\hat \rho}^p \\
        &+ \sum_{p=2}^{\infty}\frac{p}{2} \beta_p^2 (nR^2 +n - \hat \rho_{N+1}^2 - \ldots - \hat \rho_{N+n}^2) R_{\hat \sigma,\hat \rho}^p + \sum_{p=2}^{\infty} p \beta_p^2 R_{\hat \sigma,\hat \rho}^{p-1} \left(\sum_{i=1}^{n} \hat \sigma_{N+i} \hat \rho_{N+i}\right) + O\left(\frac{1}{N}\right)
    \end{aligned}
    \end{equation}
    where we define the terms
    $$
    R_{\hat \sigma, \hat \rho} = \frac{1}{N+n}\sum_{i=1}^N \hat \sigma_i \hat \rho_i
    $$

    We have performed the following computation various times when one attempts to compute the above covariance for terms such that $p \le N^{1/3}$
    \begin{equation}
    \begin{aligned}
    & \left(\frac{1}{N+n} \sum_{i=1}^N \sigma_i \rho_i\right)^{p} = \left(\frac{1}{N+n} \sum_{i=1}^N \hat \sigma_i \hat \rho_i\right)^{p} \left(1+ \frac{nR^2 +n -\hat \sigma_{N+1}^2- \ldots \hat \sigma_{N+n}^2}{N-R^2}\right)^{p/2}\left(1+ \frac{nR^2 +n -\hat \rho_{N+1}^2- \ldots \hat \rho_{N+n}^2}{N-R^2}\right)^{p/2}\\
    &= R_{\hat \sigma,\hat \rho}^p  + \frac{p}{2} R_{\hat \sigma,\hat \rho}^p \frac{nR^2 +n -\hat \sigma_{N+1}^2- \ldots \hat \sigma_{N+n}^2}{N+n} + \frac{p}{2} R_{\hat \sigma,\hat \rho}^p \frac{nR^2 +n -\hat \rho_{N+1}^2- \ldots \hat \rho_{N+n}^2}{N+n} + O\left(\frac{1}{N}\right)
    \end{aligned}
    \end{equation}

    What we have used is the fact that if $x$ is an order $\frac{1}{N}$ quantity and $p \le \sqrt{N}$, then we have that
    \begin{equation*}
        (1+x)^p \le 1+ px + 2 p^2 x^2 \le 1 +px + O\left(\frac{1}{N^{4/3}}\right)
    \end{equation*}

    We do not attempt to expand the power $\left(1+ \frac{R^2 +n -\hat \rho_{N+1}^2- \ldots \hat \rho_{N+n}^2}{N-R^2}\right)^{p/2}$ when $p \ge N^{1/3}$. Instead, we apply the trivial bound  on the overlap of $\sigma$ and $\rho$, $\frac{1}{N+n}\sum_{i=1}^{N+n} \sigma_i \rho_i \le 1$ and use decay of the temperature terms $\beta_p^2$ in order to show that the contribution from these terms is $O\left(\frac{1}{N}\right)$ in the variance.

    We now compute the covariance of the terms $\tilde H_{N+n}$
    \begin{equation}
    \begin{aligned}
        \text{Cov}(\tilde H_{N+n}(\hat \sigma), \tilde H_{N+n}(\hat \rho))& = (N+n) \sum_{p=2}^{\infty}\beta_p^2 R_{\hat \sigma,\hat \rho}^p + \sum_{p=2}^{\infty} p \beta_p^2 R_{\hat \sigma,\hat \rho}^{p-1}
        \\&+\sum_{p=2}^{\infty}p \beta_p^2 \sum_{i=1}^n \left[\sqrt{R^2 +1}-\frac{\hat \sigma_{N+i}^2}{2 \sqrt{R^2+1}}\right]\left[ \sqrt{R^2 +1}-\frac{\hat \rho_{N+i}^2}{2\sqrt{R^2+1}}\right] R_{\hat\sigma,\hat \rho}^p
    \end{aligned}
    \end{equation}

    We now compute the differences in the respective covariances.
    \begin{equation}\label{eq:CovComp}
    \begin{aligned}
        &\text{Cov}(H_{N+n}(\hat \sigma), \tilde H_{N+n}(\hat \rho)) - \text{Cov}(\tilde H_{N+n}(\hat \sigma), \tilde H_{N+n}(\hat \rho)) =\\
        &\sum_{p=2}^{\infty} \frac{p}{2} \beta_p^2 \sum_{i=1}^n \frac{\hat \sigma_{N+i}^2 \hat \rho_{N+i}^2}{4(R^2+1)} \le \sum_{p=2}^{\infty} \frac{p}{2} \beta_p^2 \frac{n C^4}{4(R^2+1)} = \text{O}\left(\frac{C^4}{R^2}\right)
        \end{aligned}
    \end{equation}

    We see that if we choose $R$ much greater than $C^2$, then the differences in the relative covariance structure will become increasingly small.

    \subsection{Comparison of the Modified Cavity Hamiltonian}

    What we will show now is that the expectation of quantities computed with respect to the Hamiltonian $\tilde{H}_{N+n}$ over the restricted sphere $S_{N+n}^{C,R,n}$ will have small difference from the same quantity computed with respect to the Hamiltonian ${H}_{N+n}$ via an interpolation procedure. We can then proceed to compute spin distributions with respect to the Hamiltonian $\tilde{H}_{N+n}$ that specifically uses the independence of the parts that we have constructed.

    \begin{thm} \label{thm:Rreplacement}
    Let $f$ be a bounded function from $\mathbb{R}^n \rightarrow \mathbb{R}$. Then we have the following comparison estimate
    \begin{equation}\label{eq:Comparison}
    \begin{aligned}
        &\bigg|\mathbb{E}\left[ \frac{\int_{S_{N+n}^{C,R,n}} f(\hat \sigma_{N+1},\ldots,\hat \sigma_{N+n})e^{H_{N+n}(\hat \sigma_1,\ldots,\hat \sigma_{N+n}) } \td \hat \sigma_S}{\int_{S_{N+n}^{C,R,n}} e^{H_{N+n}(\hat \sigma_1,\ldots,\hat \sigma_{N+n}) } \td \hat \sigma_S}\right]
        \\&- \mathbb{E}\left[ \frac{\int_{S_{N+n}^{C,R,n}} f(\hat \sigma_{N+1},\ldots,\hat \sigma_{N+n})e^{\tilde H_{N+n}(\hat \sigma_1,\ldots,\hat \sigma_{N+n}) } \td \hat \sigma_S}{\int_{S_{N+n}^{C,R,n}} e^{\tilde H_{N+n}(\hat \sigma_1,\ldots,\hat \sigma_{N+n}) } \td \hat \sigma_S}\right]\bigg| \le \text{O}\left(\frac{C^4}{R^2}\right)
        \end{aligned}
    \end{equation}
      where the constant in the error bound will be a product of the maximum of the function $f$ and a universal constant. In the integrals that have appeared beforehand, $d\hat \sigma_S$ is the pushforward of the uniform measure on the sphere $\sigma_1^2+ \ldots +\sigma_{N+n}^2 = N+n$ under the map $(\sigma_1,\ldots,\sigma_{N+n}) \rightarrow (\hat \sigma_1,\ldots,\hat \sigma_{N+n}))$
    \end{thm}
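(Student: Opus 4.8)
The plan is to compare the two Gibbs averages by a Gaussian interpolation (the smart-path method), which reduces the whole statement to the covariance estimate \eqref{eq:CovComp} that has already been carried out. First I would realize $H_{N+n}$ and $\tilde H_{N+n}$ on one probability space as \emph{independent} centred Gaussian processes indexed by $S_{N+n}^{C,R,n}$ — legitimate since \eqref{eq:Comparison} depends only on their laws and $\tilde H_{N+n}$ was constructed from its own independent Gaussian families. For $t\in[0,1]$ set
\[
  H_t(\hat\sigma)=\sqrt{1-t}\,\tilde H_{N+n}(\hat\sigma)+\sqrt{t}\,H_{N+n}(\hat\sigma),
\]
a centred Gaussian process with covariance kernel $C_t=(1-t)\,\text{Cov}(\tilde H_{N+n})+t\,\text{Cov}(H_{N+n})$, so $\partial_t C_t=\Delta$ where $\Delta(\hat\sigma,\hat\rho):=\text{Cov}(H_{N+n}(\hat\sigma),H_{N+n}(\hat\rho))-\text{Cov}(\tilde H_{N+n}(\hat\sigma),\tilde H_{N+n}(\hat\rho))$ is precisely the quantity bounded in \eqref{eq:CovComp}. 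Writing $\langle\cdot\rangle_t$ for the Gibbs average on $S_{N+n}^{C,R,n}$ against $e^{H_t}\,\td\hat\sigma_S$ and
\[
  \phi(t)=\mathbb{E}\left[\frac{\int_{S_{N+n}^{C,R,n}}f(\hat\sigma_{N+1},\ldots,\hat\sigma_{N+n})\,e^{H_t(\hat\sigma)}\,\td\hat\sigma_S}{\int_{S_{N+n}^{C,R,n}}e^{H_t(\hat\sigma)}\,\td\hat\sigma_S}\right],
\]
the left-hand side of \eqref{eq:Comparison} equals $\left|\phi(1)-\phi(0)\right|=\left|\int_0^1\phi'(t)\,\td t\right|$, so it suffices to bound $\phi'$.

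Next I would differentiate $\phi$ and apply Gaussian integration by parts in each of the two independent families; the factors $\tfrac1{2\sqrt t}$ and $\tfrac1{2\sqrt{1-t}}$ produced by $\partial_t H_t$ cancel the $\sqrt t$, $\sqrt{1-t}$ weights coming from the Gibbs factor, leaving the usual smart-path identity in which $\phi'(t)$ is a sum of a bounded number of terms (three, for a one-replica observable), each of the form $\pm\,\tfrac12\,\mathbb{E}\langle f(\hat\sigma^1)\,\Delta(\hat\sigma^a,\hat\sigma^b)\rangle_t$ for replicas $\hat\sigma^a,\hat\sigma^b$, including the diagonal term $\Delta(\hat\sigma^1,\hat\sigma^1)$. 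Since $\langle\cdot\rangle_t$ is a probability measure, $f$ is bounded, and by \eqref{eq:CovComp} we have $|\Delta(\hat\sigma,\hat\rho)|=O(C^4/R^2)$ uniformly over $\hat\sigma,\hat\rho\in S_{N+n}^{C,R,n}$ — the diagonal bound also following from $|\hat\sigma_{N+i}|\le C$ — it follows that $|\phi'(t)|\le\|f\|_\infty\cdot O(C^4/R^2)$ for all $t$, and integrating over $[0,1]$ gives \eqref{eq:Comparison}. The $O(1/N)$ remainder in \eqref{eq:CovComp} contributes only $O(1/N)$, which is harmless since the subsequent arguments send $N\to\infty$ before $R\to\infty$ (cf.\ Lemma \ref{lm:Creplace}).

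Most of the genuine content is therefore already done, sitting inside the covariance comparison \eqref{eq:CovComp} — especially the uniform control of the divergent $p$-sum via the split at $p=N^{1/3}$ and the elementary estimate $(1+x)^p\le 1+px+O(N^{-4/3})$. The one point requiring care in the proof proper is the justification of the Gaussian calculus for fields indexed by the continuum $S_{N+n}^{C,R,n}$ and the interchange of $\partial_t$ with $\mathbb{E}$ and with the $\hat\sigma$-integral; this is routine because, on this compact set with the $|\sigma_{N+i}|\le C$ cutoff active, the fields are a.s.\ continuous with Borell--TIS tail bounds and the partition functions have exponential tails and are bounded below off a negligible event, so dominated convergence applies and Stein's identity may be used after a standard finite-dimensional discretization of the index set. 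I expect this technical step, rather than the interpolation scheme itself, to be the only mild obstacle.
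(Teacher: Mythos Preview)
Your proposal is correct and follows essentially the same route as the paper: independent coupling of $H_{N+n}$ and $\tilde H_{N+n}$, the interpolating field $H_t=\sqrt{t}\,H_{N+n}+\sqrt{1-t}\,\tilde H_{N+n}$, differentiation of the interpolated Gibbs average, Gaussian integration by parts, and a termwise bound via \eqref{eq:CovComp}. The only cosmetic discrepancy is that the integration by parts actually produces four replica terms (the paper records $\tfrac12\Delta(\hat\sigma,\hat\sigma)$, $-\Delta(\hat\sigma,\hat\rho)$, $-\Delta(\hat\rho,\hat\rho)$, and $+\Delta(\hat\rho,\hat\eta)$ after combining contributions), not three; this does not affect your argument since every term is bounded in the same way by $\|f\|_\infty\cdot O(C^4/R^2)$.
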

    \begin{proof}
    In order to compare the two quantities, we will perform a Gaussian interpolation using the interpolation Hamiltonian where $H_{N+n}$ and $\tilde{H}_{N+n}$ are independent.
    \begin{equation*}
        H_t(\hat \sigma_1,\ldots,\hat \sigma_{N+n}) = \sqrt{t} H_{N+n} + \sqrt{1-t} \tilde H_{N+n}
    \end{equation*}
    We now consider the quantity
    $$
    F(t) = \mathbb{E}\left[\frac{\int_{S_{N+n}^{C,R,n}} f(\hat \sigma_{N+1},\ldots,\hat \sigma_{N+n})e^{H_t(\hat \sigma_1,\ldots,\hat \sigma_{N+n}) } \td \hat \sigma_S}{\int_{S_{N+n}^{C,R,n}} e^{H_t(\hat \sigma_1,\ldots,\hat \sigma_{N+n}) } \td \hat \sigma_S}\right]
    $$
    and remark that the quantity \eqref{eq:Comparison} is $|F(1) - F(0)|$. As is standard, we will derive a bound on the above quantity by bounding the derivative.
    To simplify notation, we will denote the quantity $Z_t:=\int_{S_{N+n}^{C,R,n}} e^{H_t(\hat \sigma_1,\ldots,\hat \sigma_{N+n})} \td \hat \sigma_S$. Since we will always integrate over the set $S_{N+n}^{C,R,n}$ we avoid here any specific mention of this set.
    We have
    \begin{equation}
    \begin{aligned}
        & F'(t) = \mathbb{E}\bigg[ \frac{1}{2\sqrt{t}}(Z_t^{-1} \int f(\hat \sigma) H(\hat \sigma) e^{H_t(\hat \sigma))}\td \hat \sigma_S - Z_t^{-2} \int f(\hat \sigma) H(\hat \rho)  e^{H_t(\hat \sigma) + H_t(\hat \rho)}\td \hat \sigma_S \td \hat \rho_S) - \\
       & \frac{1}{2 \sqrt{1-t}}(Z_t^{-1} \int f(\hat \sigma) \tilde H(\hat \sigma) e^{H_t(\hat \sigma)} \td \hat \sigma_S - Z_t^{-2} \int f(\hat \sigma)\tilde H(\hat \rho) e^{H_t(\hat \sigma) + H_t(\hat \rho)} \td \hat \sigma_S \td \hat \rho_S) \bigg]=\\
       &= \mathbb{E}\bigg[(2Z_t)^{-1}\int f(\hat \sigma) [\cv(H(\hat \sigma),H(\hat \sigma))-\cv(\tilde H(\hs), \tilde H(\hs))] e^{H_t(\hat \sigma)} \td \sigma_S \\
       & - (Z_t)^{-2} \int f(\hat \sigma) [\cv(H(\hat \sigma),H(\hat \rho))-\cv(\tilde H(\hs), \tilde H(\hr))] e^{H_t(\hat \sigma)+ H_t(\hr)}\td \hs_S \td \hr_S\\
       & -(Z_t)^{-2} \int f(\hs) [\cv(H(\hr),H(\hr))-\cv(\tilde H(\hr), \tilde H(\hr))] e^{H_t(\hs) + H_t(\hr)} \td \hs_S \hr_S \\&+ (Z_t)^{-3} \int f(\hs) [\cv(H(\hr),H(\hx))- \cv(\tilde H(\hr), \tilde H(\hx))] e^{H_t(\hs) + H_t(\hr) + H_t(\hx)} \td \hs_S \hr_S \hx_S\bigg]
    \end{aligned}
    \end{equation}
    The first equality merely computed the derivative of the quantities. In order to derive the last expression, we performed an integration by parts.
    On the last line, we apply \eqref{eq:CovComp} and see that the latter quantity will be of order $\text{O}\left(\frac{C^4}{R^2}\right)$. We also remark that the spins $\hat \sigma_i$, will have Gaussian Tails, so this theorem will hold for powers of spins $\hat \sigma_{N+i}^k$
    \end{proof}

    We see that if one takes $R$ much greater than $C^2$, then the distribution with respect to $H$ will be the same as that with respect to $\tilde H$. From this point on, we will now attempt to compute quantities with respect to the distribution using the Hamiltonian $\tilde H$

    Recall the notation \eqref{Eq:hath}, \eqref{eq:Y} and \eqref{eq:Z}.
    Clearly, one can see that
    \begin{equation} \label{Eq:1stCavRep}
    \begin{aligned}
        &\mathbb{E}\prod_{l=1}^k\langle  \ (\hat \sigma_{N+1}^l)^{e_{1,l}}\ldots (\hat \sigma_{N+n}^l)^{e_{l,n}}\rangle_{\tilde H_{N+n}} \\
        &= \mathbb{E}\prod_{l=1}^k \frac{\langle \int_{[-C,C]^n} \prod_{i=1}^n s_i^{e_{l,i}} e^{Z^i(\hat \sigma^l) s_i + \left[\sqrt{R^2 +1} - \frac{s_i^2}{2\sqrt{R^2+1}}\right] Y^i(\hat \sigma^l)}  \left(\frac{N+n - \sum_{i=1}^n s_i^2}{N+n}\right)^{(N+n)/2} \td s_i\rangle_{\hat H_N}}{\langle \int_{[-C,C]^n} \prod_{i=1}^n e^{Z^i(\hat \sigma^l) s_i + \left[\sqrt{R^2 +1} -\frac{s_i^2}{2\sqrt{R^2+1}}\right] Y^i(\hat \sigma^l)} \left(\frac{N+n - \sum_{i=1}^n s_i^2}{N+n}\right)^{(N+n)/2} \td s_i\rangle_{\hat H_N}} \\
        &=  \mathbb{E}\prod_{l=1}^k \frac{\langle \prod_{i=1}^n \int_{[-C,C]}  s_i^{e_{l,i}} e^{-\frac{s_i^2}{2}+ Z^i(\hat \sigma^l) s_i + \left[\sqrt{R^2 +1} -\frac{s_i^2}{2\sqrt{R^2+1}}\right] Y^i(\hat \sigma^l)} \td s_i \rangle_{\hat H_N}}{\langle \prod_{i=1}^n \int_{[-C,C]}  e^{-\frac{s_i^2}{2}+Z^i(\hat \sigma^l) s_i + \left[\sqrt{R^2 +1} -\frac{s_i^2}{2\sqrt{R^2+1}}\right]Y^i(\hat \sigma^l)} \td s_i \rangle_{\hat H_N}} + \text{O}\left(\frac{1}{N}\right)
    \end{aligned}
    \end{equation}

\section{Reduction to Finite Replica Symmetry Breaking}

We will show that due to Ultrametricity,
\begin{equation}\label{eq:impeq}
    \lim_{N \rightarrow \infty} \mathbb{E}\prod_{l=1}^k \frac{\langle \prod_{i=1}^n \int_{[-C,C]}  s_i^{e_{l,i}} e^{-\frac{s_i^2}{2}+ Z^i(\hat \sigma^l) s_i + \left[\sqrt{R^2 +1} -\frac{s_i^2}{2\sqrt{R^2+1}}\right] Y^i(\hat \sigma^l)} \td s_i \rangle_{\hat H_N}}{\langle \prod_{i=1}^n \int_{[-C,C]}  e^{-\frac{s_i^2}{2}+Z^i(\hat \sigma^l) s_i + \left[\sqrt{R^2 +1} -\frac{s_i^2}{2\sqrt{R^2+1}}\right]Y^i(\hat \sigma^l)} \td s_i \rangle_{\hat H_N}}
\end{equation}
only depends on the Hamiltonian $\hat{H}_N$ through its limiting overlap distribution $\zeta^*$, recall this notation from Theorem \ref{them:mainthm}. The right hand side of \eqref{eq:impeq} can be understood as a continuous function $F^{R,C,E}(\zeta)$, where $E$ is the set of all values $e_{l,i}$ evaluated at $\zeta^*$.

Combining the results of \ref{lm:Creplace} , \ref{thm:Rreplacement} and \eqref{Eq:1stCavRep}, we are able to derive the fact that

\begin{equation}
    \lim_{N \rightarrow \infty} \mathbb{E} \prod_{l=1}^k \langle(\sigma_1)^{e_{l,1}} \ldots (\sigma_n^l)^{e_{l,n}}\rangle = \lim_{C \rightarrow \infty} \lim_{R \rightarrow \infty} F^{R,C,E}(\zeta^*)
\end{equation}

What we would like to do is to reduce the computation to when $\zeta^*$ satisfies finite replica symmetry breaking.
Let $\zeta_i$ be a sequence of probability measures approaching $\zeta^*$ in the weak$*$ topology.

We have that
\begin{equation}
    \lim_{N \rightarrow \infty} \mathbb{E} \prod_{l=1}^k \langle(\sigma_1^l)^{e_{l,1}} \ldots (\sigma_n^l)^{e_{l,n}}\rangle= \lim_{C \rightarrow \infty} \lim_{R \rightarrow \infty} \lim_{i \rightarrow \infty}F^{R,C,E}(\zeta_i)
\end{equation}

We would like to exchange the limits so that we can write the limit as
\begin{equation}
    \lim_{i \rightarrow \infty} \lim_{C \rightarrow \infty} \lim_{R \rightarrow \infty} F^{R,C,n}(\zeta_i)
\end{equation}

This would involve showing uniform approach of $F^{R,C,E}(\zeta_i)$ to its limit in $\zeta$. We will proceed to justify this exchange of limits in the following sections.

\subsection{Computation of $F^{R,C,E}(\zeta)$ under Finite Replica Symmetry Breaking}

    \begin{lm}
 There is some function $F^{R,C,E}(\zeta)$ that is continuous on the weak$*$ topology of probability measures on $[0,1]$ such that $\lim_{N \rightarrow \infty} \mathbb{E}\prod_{l=1}^k \langle(\hat \sigma_{N+i}^l)^{e_{l,1}}\ldots (\hat \sigma_{N + n}^l)^{e_{l,n}}\rangle_{\tilde{H}_{N+n}} = F^{R,C,E}(\zeta^*)$ and for measures $\zeta$ satisfying finite replica symmetry breaking with support at points $0 = q_0 \le q_1 \le \ldots \le q_r=q^*$, we have
    \begin{equation} \label{def:fform}
    \begin{aligned}
    &F^{C,R,E}(\zeta) =\\
    &\mathbb{E}\prod_{l=1}^k \frac{\sum_{\alpha^l} w_{\alpha^l} \prod_{i=1}^n \int_{-C}^C s_i^{e_{l,i}} e^{-\frac{1}{2}s_i^2[1+ \xi'(q)(1-q)] + Z^i\left(h_{\alpha^l}\right) s_i + Y^i\left(h_{\alpha^l}\right) \left[\sqrt{R^2 +1} -\frac{s_i^2}{2\sqrt{R^2+1}} \right]  +\frac{(\xi'(1) - q \xi'(q))s_i^4}{4(R^2+1)} } \td s_i}{\sum_{\alpha^l}w_{\alpha^l} \prod_{i=1}^n \int_{-C}^C e^{-\frac{1}{2}s_i^2[1+ \xi'(q)(1-q)] + Z^i\left(h_{\alpha^l}\right) s_i + Y^i\left(h_{\alpha^l}\right) \left[\sqrt{R^2 +1} -\frac{s_i^2}{2\sqrt{R^2+1}} \right]  +\frac{(\xi'(1) - q \xi'(q))s_i^4}{4(R^2+1)} } \td s_i}
    \end{aligned}
    \end{equation}
    where the quantities on the right hand side of the above expression are computed with respect to an RPC whose overlap distribution is given by $\zeta$ and $Y_i$ $Z_i$ are independent Gaussian processes with covariance given by
    \begin{equation} \label{def:MagField}
    \begin{aligned}
        &\text{Cov}(Y_i(h_\alpha), Y_i(h_\beta))=\langle h_\alpha,h_\beta \rangle \xi'(\langle h_\alpha,h_\beta \rangle) \\
        &\text{Cov}(Z_i(h_\alpha),Z_i(h_\beta)) = \xi'(\langle h_\alpha,h_\beta \rangle)
    \end{aligned}
    \end{equation}
    where $l $ is a replica index for $\alpha^l $, $e_{l,i} $ is an exponent, and $ \langle \cdot, \cdot \rangle$ is the inner product.
    \end{lm}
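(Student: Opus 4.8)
The plan is to prove the two assertions of the lemma in turn: first, that the $N\to\infty$ limit exists and equals a weak-$*$ continuous functional $F^{R,C,E}$ of the limiting overlap distribution of $\hat H_N$; and second, that when $\zeta$ is finitely supported this functional is given by \eqref{def:fform}. By \eqref{Eq:1stCavRep} the quantity $\mathbb{E}\prod_{l}\langle(\hat\sigma_{N+1}^l)^{e_{l,1}}\cdots(\hat\sigma_{N+n}^l)^{e_{l,n}}\rangle_{\tilde H_{N+n}}$ agrees up to $O(1/N)$ with the left side of \eqref{eq:impeq}, so it suffices to analyze the latter. I would rerun the reduction already used for Lemma \ref{lm:GaussInt}, now with the full Gibbs measure $\langle\cdot\rangle_{\hat H_N}$ in place of a pure-state restriction. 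After integrating $s_i$ over the box $[-C,C]^n$, the numerator and denominator of \eqref{eq:impeq} are bounded continuous functions of the Gaussian fields $(Z^i(\hat\sigma^l),Y^i(\hat\sigma^l))$; since on the box the exponent is bounded below by $-\tfrac{nC^2}{2}-(C+\sqrt{R^2+1})\sum_i\big(|Z^i(\hat\sigma)|+|Y^i(\hat\sigma)|\big)$, the denominator is at least $c_0\,e^{-(C+\sqrt{R^2+1})\sum_i(\sup_{\hat\sigma}|Z^i|+\sup_{\hat\sigma}|Y^i|)}$ with $c_0>0$, and Borell--TIS applied to the bounded-variance fields $Z^i,Y^i$ gives $\mathbb{P}(\mathrm{denominator}\le\delta)\to0$ and $\mathbb{P}(\mathrm{numerator}\ge L)\le K/L$ uniformly in $N$, exactly as in the estimates preceding Lemma \ref{lm:GaussInt}. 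Polynomial approximation of the reciprocals then reduces \eqref{eq:impeq} to a finite linear combination of expressions $\mathbb{E}\langle\Phi(R_N^m)\rangle_{\hat H_N^{\otimes m}}$ with $\Phi$ bounded continuous and $R_N^m$ the overlap array of $m$ replicas, using that $\hat H_N$, the $Y^i$ and the $Z^i$ are built from disjoint families of Gaussians, so Fubini turns each Gaussian integral into a deterministic function of the overlaps.

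Since $\hat H_N$ is, after rescaling the sphere, a vanishing perturbation of the generic model $H_N$, it is itself generic with the same limiting overlap distribution $\zeta^*$; by the Ghirlanda--Guerra identities and the uniqueness of the limiting overlap for generic models (\cite{Panchenkobook}[Thm 2.13]), the law of $R_N^m$ converges to the array determined by $\zeta^*$, and the limit of \eqref{eq:impeq} exists. I would then define $F^{R,C,E}(\zeta)$, for an arbitrary probability measure $\zeta$ on $[0,1]$, by applying the same bounded continuous functional $\Phi$ to the Ghirlanda--Guerra array attached to $\zeta$; the weak-$*$ continuity of $\zeta\mapsto(\text{that array})$ together with boundedness of $\Phi$ gives weak-$*$ continuity of $F^{R,C,E}$, and by construction $\lim_N\mathbb{E}\prod_l\langle\cdots\rangle_{\tilde H_{N+n}}=F^{R,C,E}(\zeta^*)$.

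Now suppose $\zeta$ is supported on $0=q_0<\cdots<q_r=q$, so that the array attached to $\zeta$ is that of a Ruelle probability cascade and $\langle f(\hat\sigma^l)\rangle_{\hat H_N}\to\sum_{\alpha^l}w_{\alpha^l}f(h_{\alpha^l})$, with $\langle h_{\alpha^l},h_{\alpha^{l'}}\rangle$ the hierarchical overlaps and $(w_{\alpha^l})$ the cascade weights. The integrand of \eqref{eq:impeq} sees $\hat\sigma^l$ only through $(Y^i(\hat\sigma^l),Z^i(\hat\sigma^l))_{i\le n}$, which are jointly Gaussian, independent of $\hat H_N$ and across $i$, and whose covariances — computed from \eqref{eq:Y}, \eqref{eq:Z}, with the large-$p$ contribution controlled by $\beta_p\le p^{-2}$ — converge to $\langle\cdot,\cdot\rangle\,\xi'(\langle\cdot,\cdot\rangle)$ for $Y^i$ and $\xi'(\langle\cdot,\cdot\rangle)$ for $Z^i$, with self-covariance tending to $\xi'(1)$. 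Hence, jointly with the cascade, $Y^i(\hat\sigma^l)$ converges to $Y^i(h_{\alpha^l})+\sqrt{\xi'(1)-q\xi'(q)}\,g^Y_{l,i}$ and $Z^i(\hat\sigma^l)$ to $Z^i(h_{\alpha^l})+\sqrt{\xi'(1)-\xi'(q)}\,g^Z_{l,i}$, where $Y^i,Z^i$ on the cascade have the covariances \eqref{def:MagField} and $g^Y_{l,i},g^Z_{l,i}$ are independent standard Gaussians — this decomposition being exact in the limit precisely because ultrametricity forces the overlap of two replicas in a single pure state to equal $q$. Substituting these limits into \eqref{eq:impeq} and carrying out the Gaussian integrations over $g^Y_{l,i},g^Z_{l,i}$ — using $\big(\sqrt{R^2+1}-\tfrac{s_i^2}{2\sqrt{R^2+1}}\big)^2=(R^2+1)-s_i^2+\tfrac{s_i^4}{4(R^2+1)}$ and cancelling the $s_i$-independent factor $e^{\frac12(\xi'(1)-q\xi'(q))(R^2+1)}$ between numerator and denominator — renormalizes the quadratic coefficient to $1+\xi'(q)(1-q)$ and produces the quartic correction $\propto s_i^4/(R^2+1)$, yielding precisely \eqref{def:fform}. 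Since $\zeta^*$ is a weak-$*$ limit of such finitely supported measures and $F^{R,C,E}$ is continuous, this is consistent with the first assertion.

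The main obstacle is the joint-convergence step in the preceding paragraph: showing that under the cascade limit the triple $(\hat H_N,(Y^i),(Z^i))$ and the Gibbs measure converge together, so that the within-pure-state fluctuations of $Y^i$ and $Z^i$ decouple into the independent Gaussians $g^Y_{l,i},g^Z_{l,i}$ with the stated variances $\xi'(1)-q\xi'(q)$ and $\xi'(1)-\xi'(q)$, and so that the Gibbs average collapses to $\sum_\alpha w_\alpha(\cdot)$. This rests on the Ghirlanda--Guerra/ultrametric structure to pin the intra-state overlap at $q$ and on the invariance properties of the Ruelle cascade; the remaining steps — verifying that the covariances of $Y^i,Z^i$ really do converge to the claimed functions of the overlap, and tracking the quartic term through the eventual $R\to\infty$ limit — are routine perturbative bookkeeping of the kind already performed in the earlier sections.
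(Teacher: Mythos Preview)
Your overall strategy coincides with the paper's: reduce the ratio in \eqref{eq:impeq} to a bounded continuous functional of the replica overlap array via polynomial approximation of the reciprocal denominator, invoke the Ghirlanda--Guerra identities so that this array (and hence the functional) depends only on the law of $R_{1,2}$, check that $\hat H_N$ is a generic perturbation of $H_N$ with the same limiting overlap law $\zeta^*$, and then evaluate at a finite RSB $\zeta$ by realizing the overlap array with a Ruelle probability cascade. This is exactly what the paper does; your write-up is in fact more explicit than the paper's on the last step.

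There is, however, a genuine gap in your denominator control. Your lower bound
\[
\Big\langle \textstyle\int_{[-C,C]^n}\!\! e^{(\cdots)}\,ds\Big\rangle_{\hat H_N}\;\ge\; c_0\,\exp\!\Big(-(C+\sqrt{R^2+1})\sum_i\big(\sup_{\hat\sigma}|Z^i(\hat\sigma)|+\sup_{\hat\sigma}|Y^i(\hat\sigma)|\big)\Big)
\]
is correct pointwise, but Borell--TIS only controls $\sup_{\hat\sigma}|Z^i|$ around its mean, and on $S_N(N-nR^2)$ that mean is of order $\sqrt N$ (the metric entropy of the sphere grows linearly in $N$ while the variance stays bounded). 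Hence the bound $\mathbb P(\text{denom}\le\delta)\to0$ is \emph{not} uniform in $N$, and the polynomial approximation step breaks. The paper handles this differently: immediately after the lemma it proves, via the Cauchy--Schwarz trick
\[
\Big(\int e^{[\cdots]Y}d\nu_N^R\Big)\Big(\int e^{-[\cdots]Y}d\nu_N^R\Big)\ge C^4
\]
together with a direct $k$-th moment computation, a tail bound $\mathbb P(\text{denom}\le 1/L)\le K(k)L^{-k}$ that is uniform in $N$. You should replace the Borell--TIS step by this argument.

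A second, smaller point concerns how you pass to \eqref{def:fform}. Writing $Z^i(\hat\sigma^l)\to Z^i(h_{\alpha^l})+\sqrt{\xi'(1)-\xi'(q)}\,g^Z_{l,i}$ with $g$'s indexed by the replica $l$ and then ``integrating them out'' is the right computation, but as stated those $g$'s sit in both numerator and denominator of the same ratio and cannot be averaged termwise by the outer $\mathbb E$. The clean way (implicit in the paper's one-line ``the right hand side of \eqref{def:fform} is the appropriate quantity for the RPC'') is to regard the extra Gaussian piece as attached to the \emph{atom} --- i.e., as the last layer of the cascade, with self-overlap $1$ and pairwise overlap $q$ --- so that it is integrated by the Gibbs bracket $\sum_\alpha w_\alpha(\cdot)$ itself; then the $s_i$-integral absorbs it exactly and produces the renormalized quadratic coefficient $1+\xi'(q)(1-q)$ and the quartic term, separately in numerator and denominator, yielding \eqref{def:fform}.
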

    \begin{proof}

   For simplicity, we will write out the proof in the one replica case and a  one particle cavity $e_{1,1}=1$. We will show the right hand side of the second line  \eqref{eq:impeq} can be shown to be a bounded continuous function of the overlap distribution. Once one has controls on the decay of the denominator, this is an exercise in applying the Stone-Weierstrauss theorem of approximation by polynomials. Namely, once the denominator is replaced by a polynomial, the resulting quantity is easily seen to be a function of the law of the infinite overlap matrix $R_{l,l'}$ of replicas of $\hat H_N$. By \cite{Panchenkobook}[Thm 2.14],  the law of the infinite overlap matrix is a function of the law of the overlap distribution.

    The limiting overlap distribution corresponding to $\hat H_N$ is $\zeta^*$, the same as that of $H_N$. This is a consequence of the fact that $\hat H_N$ and $H_N$ are generic models so the distribution of the overlap will be determined by the limiting value of the Free Energy \cite{Panchenkobook}[Thm 3.7]. The limiting value of the Free Energy of the two models can seen to be the same via interpolation.

    The formula at finite replica symmetry breaking is due to the fact that we can construct a Ruelle Probability Cascade with any given degree of finite replica symmetry breaking. The right hand side of \eqref{def:fform} is the appropriate quantity for the Ruelle Probability Cascade.

    \end{proof}
   We have the following lemma to bound the denominator. We introduce the notation $\td v^R_N$ to be the Gibbs' measure associated to $\hat H_N$.
    \begin{lm}
For any $k$, we have the following estimate.
\begin{equation}
P\left(\int_{S_N(N-R^2)} \int_{-C}^C e^{-\frac{s^2}{2} +Z(\hat \sigma) s + \left[\sqrt{R^2 +1} - \frac{s^2}{2\sqrt{R^2+1}}\right] Y(\hat \sigma)}   \td s \td \nu_N^R \le \frac{1}{L}\right) \le \frac{K(k)}{L^k}
\end{equation}
where $K(k)$ is some constant possibly depending on $k$
\end{lm}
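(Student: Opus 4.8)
The plan is to bound the negative moments of
$$
\mathcal{Z} := \int_{S_N(N-R^2)} \int_{-C}^C e^{-\frac{s^2}{2} + Z(\hat\sigma)s + \left[\sqrt{R^2+1} - \frac{s^2}{2\sqrt{R^2+1}}\right]Y(\hat\sigma)}\,\td s\,\td\nu_N^R(\hat\sigma)
$$
and then invoke Markov's inequality: since $P(\mathcal{Z}\le 1/L) = P(\mathcal{Z}^{-k}\ge L^k) \le \mathbb{E}[\mathcal{Z}^{-k}]\,L^{-k}$, it suffices to produce a finite bound $\mathbb{E}[\mathcal{Z}^{-k}] \le K(k)$, uniform in $N$, where the constant is allowed to depend on $k$ and on the fixed parameters $C$, $R$ and $\xi$.

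First I would lower-bound $\mathcal{Z}$ via Jensen. Writing $\mathcal{Z} = 2C\int e^{g}\,\td\mu$ for the probability measure $\td\mu = \frac{1}{2C}\,\td s\,\td\nu_N^R$ on $[-C,C]\times S_N(N-R^2)$ and $g(s,\hat\sigma) = -\frac{s^2}{2} + Z(\hat\sigma)s + [\sqrt{R^2+1} - \frac{s^2}{2\sqrt{R^2+1}}]Y(\hat\sigma)$, Jensen gives $\mathcal{Z} \ge 2C\exp(\int g\,\td\mu)$. Integrating $g$ in $s$ over $[-C,C]$ against $\frac{\td s}{2C}$: the odd term $Z(\hat\sigma)s$ contributes $0$, the term $-\frac{s^2}{2}$ contributes $-\frac{C^2}{6}$, and the last term contributes $a_{R,C}\,Y(\hat\sigma)$ with $a_{R,C} := \sqrt{R^2+1} - \frac{C^2}{6\sqrt{R^2+1}}$. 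Integrating then in $\hat\sigma$ against $\nu_N^R$,
$$
\mathcal{Z} \;\ge\; 2C\exp\!\left(-\tfrac{C^2}{6} + a_{R,C}\,\langle Y\rangle_{\nu_N^R}\right), \qquad \langle Y\rangle_{\nu_N^R} := \int Y(\hat\sigma)\,\td\nu_N^R(\hat\sigma),
$$
so $\mathcal{Z}^{-k} \le (2C)^{-k}e^{kC^2/6}\,e^{-k a_{R,C}\langle Y\rangle_{\nu_N^R}}$. Note the field $Z$ has disappeared — it is killed by the symmetric $s$-integration — so no estimate on $Z$ is needed.

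The essential step is to use that $Y$ is independent of $\nu_N^R$: by construction $\nu_N^R$ is a measurable function of the disorder of $\hat H_N$ in \eqref{Eq:hath}, which involves only the Gaussians $g_{i_1\ldots i_p}$ with all indices $\le N$, while the field $Y$ of \eqref{eq:Y} is built from a disjoint family of Gaussians. Conditionally on $\nu_N^R$, the linear functional $\langle Y\rangle_{\nu_N^R}$ is therefore a centered Gaussian with variance $v(\nu_N^R) = \iint \mathrm{Cov}(Y(\hat\sigma),Y(\hat\rho))\,\td\nu_N^R\,\td\nu_N^R$, which by Cauchy--Schwarz is at most $V_Y := \sup_{\hat\sigma\in S_N(N-R^2)}\mathrm{Var}(Y(\hat\sigma))$; the decay $\beta_p\le p^{-2}$ in Assumption \ref{asmp:Ham} makes $V_Y$ finite and bounded uniformly in $N$, since the pointwise variance of $Y$ over the renormalized sphere is dominated by the convergent series $\sum_{p\ge 2}\beta_p^2$. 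Applying the Gaussian moment generating function and then integrating over the disorder of $\hat H_N$,
$$
\mathbb{E}[\mathcal{Z}^{-k}] \;\le\; (2C)^{-k}e^{kC^2/6}\,\mathbb{E}\!\left[e^{\frac12 k^2 a_{R,C}^2\, v(\nu_N^R)}\right] \;\le\; (2C)^{-k}\exp\!\left(\tfrac{k}{6}C^2 + \tfrac12 k^2 a_{R,C}^2 V_Y\right) =: K(k),
$$
which is finite, independent of $N$, and depends only on $k$, $C$, $R$ and $\xi$. Combined with the Markov step this is the claim.

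The only nontrivial ingredients are the independence of $Y$ from $\nu_N^R$ and the uniform-in-$N$ bound on its pointwise variance; the former is immediate from the construction of $\tilde H_{N+n}$ (whose local fields are driven by fresh Gaussians disjoint from those of $\hat H_N$), and the latter is precisely where the coefficient decay of Assumption \ref{asmp:Ham} enters. Everything else — Jensen, the elementary $s$-integral, and the Gaussian moment bound — is routine.
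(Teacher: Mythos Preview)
Your argument is correct and takes a genuinely different (and cleaner) route than the paper's. The paper first kills the $Z$-term by writing the $s$-integral as an integral of $\cosh(sZ(\hat\sigma))\ge 1$, then converts the lower-tail event for $\int e^{[\cdots]Y}\,\td s\,\td\nu$ into an upper-tail event for $\int e^{-[\cdots]Y}\,\td s\,\td\nu$ via the Cauchy--Schwarz inequality $(\int e^h)(\int e^{-h})\ge(\int 1)^2$, uses $-Y\stackrel{d}{=}Y$, and finally bounds positive moments by integrating out the independent Gaussian $Y$ first. You instead apply Jensen once to get the deterministic pointwise lower bound $\mathcal{Z}\ge 2C\,e^{-C^2/6}\exp(a_{R,C}\langle Y\rangle_{\nu_N^R})$, which lets you bound $\mathbb{E}[\mathcal{Z}^{-k}]$ directly by the Gaussian moment generating function of $\langle Y\rangle_{\nu_N^R}$. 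Both proofs rest on exactly the same two substantive facts---the independence of $Y$ from $\nu_N^R$ and the uniform-in-$N$ bound on $\mathrm{Var}(Y(\hat\sigma))$---but your Jensen route avoids the Cauchy--Schwarz duality step and the distributional symmetry $-Y\stackrel{d}{=}Y$ entirely, and makes the origin of the constant $K(k)$ fully explicit.
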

\begin{proof}
We have the following
\begin{equation}
    \begin{aligned}
    &\int_{-C}^{C} \exp{\left\{-\frac{s^2}{2}+ s Z(\hat \sigma) + \left[\sqrt{R^2 +1} - \frac{s^2}{2\sqrt{R^2+1}}\right] Y(\hat \sigma)\right\}} \td s \\
    &\ge \int_{0}^{C}\exp{\left\{-\frac{s^2}{2} + \left[\sqrt{R^2 +1} - \frac{s^2}{2\sqrt{R^2+1}}\right] Y(\hat \sigma)\right\}} 2 \cosh{s Z(\hat \sigma)} \td s\\
    & \ge e^{-C^2/2} \int_{0}^{C} \exp\left\{ \left[\sqrt{R^2 +1} - \frac{s^2}{2\sqrt{R^2+1}}\right] Y(\hat \sigma)\right\}\td s
    \end{aligned}
\end{equation}
Thus, we have that
\begin{equation}
\begin{aligned}
&P\left(\int_{S_N(N-R^2)} \int_{-C}^C e^{-\frac{s^2}{2} +Z(\hat \sigma) s + \left[\sqrt{R^2 +1} - \frac{s^2}{2\sqrt{R^2+1}}\right] Y(\hat \sigma)}   \td s \td \nu^R_N \le \frac{1}{L}\right) \\
&\le P\left(\int_{S_N(N-R^2)} \int_0^C \exp\left\{\left[\sqrt{R^2 +1} - \frac{s^2}{2\sqrt{R^2+1}}\right] Y(\hat \sigma)\right\}\td s \td \nu^{R}_{N} \le \frac{e^{C^2/2}}{L}\right)
\end{aligned}
\end{equation}
Now we have that by Cauchy-Schwartz
\begin{equation}
\begin{aligned}
    &\left(\int_{S_N(N-R^2)}\int_0^C \exp\left\{\left[\sqrt{R^2 +1} - \frac{s^2}{2\sqrt{R^2+1}}\right] Y(\hat \sigma))\right\}\td s \td \nu^{R}_N\right)\\
    &\left(\int_{S_N(N-R^2)} \int_0^C  \exp\left\{-\left[\sqrt{R^2 +1} - \frac{s^2}{2\sqrt{R^2+1}}\right] Y(\hat \sigma)\right\}\td s \td \nu^{R}_N\right)\ge C^4
\end{aligned}
\end{equation}
Thus, we have that

\begin{equation}
\begin{aligned}
&P\left(\int_{S_N(N-R^2)} \int_0^C \exp\left\{\left[\sqrt{R^2 +1} - \frac{s^2}{2\sqrt{R^2+1}}\right] Y(\hat \sigma)\right\}\td s \td \nu^{R}_{N}  \le \frac{e^{C^2/2}}{L}\right)
\\&\le P\left(\int_{S_N(N-R^2)} \int_0^C \exp\left\{-\left[\sqrt{R^2 +1} - \frac{s^2}{2\sqrt{R^2+1}}\right] Y(\hat \sigma)\right\}\td s \td \nu^{R}_{N} \ge \frac{L C^4}{e^{C^2/2}}\right)
\end{aligned}
\end{equation}
Notice that $-Y(\hat \sigma)$ and $Y(\hat \sigma)$ have the same distribution. The will bound the right hand side of the above equation by Markov's inequality.

Let us find the $k$th moment; it is
\begin{equation}
    \mathbb{E}\int_{S_N(N-R^2)^k} \int_0^C\ldots\int_0^C \exp\left\{-\left[\sqrt{R^2 +1} - \frac{s_j^2}{2\sqrt{R^2+1}}\right] Y(\hat \sigma_j)\right\} \td s_1\ldots \td s_n (\td \nu^{R}_N)^{\otimes k}
\end{equation}
We can perform the integration in the $Y(\hat \sigma_j)$ Gaussian random variables first, noting that they are independent of the measure $\nnu$. Since $R$ and $s_i \in [0,C]$ are bounded, this is certainly some finite quantity. Thus, we can apply Markov's inequality in order to get the bound that
$$
P\left(\int_{S_N(N-R^2)} \int_0^C \exp\left\{-\left[\sqrt{R^2 +1} - \frac{s^2}{2\sqrt{R^2+1}}\right] Y(\hat \sigma)\right\}\td s \td \nu^{R}_{N} \ge \frac{L C^4}{e^{C^2/2}}\right) \le \frac{K(k)}{k^l}
$$
and we have proved the lemma.
\end{proof}

 \subsection{The Uniform limit in $R$}

 In this section, we will compute $\lim_{R \rightarrow \infty} F^{R,C,E}(\zeta)$ for those $\zeta$ that satisfy finite replica symmetry breaking. If we then show that if we can bound the difference $|F^{R,C,E}(\zeta) - \lim_{R \rightarrow \infty} F^{R,C,E}(\zeta)|$ uniformly along the sequence $\zeta_i$ approaching $\zeta^*$, we will be able to exchange the limit $\lim_{i \rightarrow \infty} \lim_{R\rightarrow \infty} = \lim_{R\rightarrow \infty} \lim_{i \rightarrow \infty}$

 We  can compute the limit $R \rightarrow \infty$ by applying the Bolthausen-Sznitman invariance for RPCs with respect to the Gaussian tilt $\prod_{i=1}^n Y^i(h_\alpha)\sqrt{R^2+1}$.  For simplicity of presentation, we will only present our results in the case that $n=1$.

\begin{lm}
Assume we are evaluating $F^{R,C,E}(\zeta)$ so that the overlap distribution $\zeta$ satisfies finite RSB. Let the support of the measure $\zeta$ be $0=q_0 \le q_1\le \ldots \le q_r=q_*$. We then have the following result

\begin{equation}\label{eq:limR}
\begin{aligned}
    \lim_{R \rightarrow \infty} &\mathbb{E} \prod_{l=1}^k \frac{\sum_{\alpha^l} w_{\alpha^l}  e^{\sqrt{R^2+1}Y\left(h_{\alpha^l}\right)}\prod_{i=1}^n\int_{-C}^C s_i^{e_{l,i}} e^{-\frac{1}{2}s_i^2[1+ \xi'(q_*)(1-q_*)] + Z^i\left(h_{\alpha^l}\right) s_i - Y^i\left(h_{\alpha^l}\right)\frac{s_i^2}{2\sqrt{R^2+1}}+\frac{(\xi'(1) - q_* \xi'(q_*))s_i^4}{4(R^2+1)} } \td s_i}{\sum_{\alpha^l}w_{\alpha^l} \prod_{i=1}^n e^{\sqrt{R^2+1} Y^i\left(h_{\alpha^l}\right)} \int_{-C}^C e^{-\frac{1}{2}s_i^2[1+ \xi'(q_*)(1-q_*)] + Z^i\left(h_{\alpha^l}\right) s_i - Y\left(h_{\alpha^l}\right) \frac{s_i^2}{2\sqrt{R^2+1}} +\frac{(\xi'(1) - q_* \xi'(q_*)s_i^4}{4(R^2+1)}  } \td s_i}=\\
    &\mathbb{E} \prod_{l=1}^k \frac{\sum_{\alpha_l} w_{\alpha_l}\prod_{i=1}^n  \int_{-C}^C s_i^{e_{l,i}} e^{-\frac{1}{2}s_i^2[1+ \hat{S}_\zeta ] + Z^i\left(h_{\alpha^l}\right) s_i } \td s_i}{\sum_{\alpha^l}w_{\alpha^l}  \prod_{i=1}^n \int_{-C}^C e^{-\frac{1}{2}s_i^2[1+ \hat{S}_\zeta] + Z^i\left(h_{\alpha^l}\right) s_i } \td s_i}
\end{aligned}
\end{equation}
where $\hat{S}_\zeta:=\xi'(q_*)(1-q_*) +\sum_{i=1}^{r}(q_r\xi'(q_r) - q_{r-1} \xi'(q_{r-1})) \zeta([0,q_{r-1}])$.

Moreover, the limit in $R$ is uniform in $\zeta$ for $\zeta_i$ a sequence of probability measures with finite RSB that approach $\zeta^*$.
\end{lm}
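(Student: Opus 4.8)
The plan is to isolate the Gaussian tilt $\exp(\sqrt{R^2+1}\,Y(h_\alpha))$, absorb it using the Bolthausen--Sznitman invariance for Ruelle probability cascades, and then let $R\to\infty$ while keeping all error terms expressed through quantities that do not see the cascade parameters, so that the convergence is automatically uniform along the sequence $\zeta_i$. First I would reduce to $n=1$ as in the statement and record the structure of the integrand in \eqref{eq:limR}: the $\alpha$-dependence of numerator and denominator splits into the tilt $\exp(\sqrt{R^2+1}\,Y(h_\alpha))$ sitting outside the $s$-integral, the term $Z(h_\alpha)s$, and the residual term $-Y(h_\alpha)\tfrac{s^2}{2\sqrt{R^2+1}}$, together with the deterministic quartic term $\tfrac{(\xi'(1)-q_*\xi'(q_*))s^4}{4(R^2+1)}$. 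Here $Y$ and $Z$ have overlap-covariances $q\xi'(q)$ and $\xi'(q)$ and are independent of each other and of the cascade weights; this independence is exactly what makes the tilt act cleanly.

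The main step is to apply the Bolthausen--Sznitman invariance \cite{Panchenkobook} to the tilt. Since the covariance $(R^2+1)\,q\xi'(q)$ of $\sqrt{R^2+1}\,Y$ is a nondecreasing function of the overlap and Gaussians have all exponential moments, the reweighted measure $\hat w_\alpha \propto w_\alpha\exp(\sqrt{R^2+1}\,Y(h_\alpha))$ is again a cascade with the same overlap law $\zeta$; the independent field $Z$ is left unshifted; and the copy of $Y$ surviving in the residual term acquires a deterministic drift $\sum_{p=1}^{r}\zeta([0,q_{p-1}])\bigl(\chi(q_p)-\chi(q_{p-1})\bigr)$, where $\chi(q)=\sqrt{R^2+1}\,q\xi'(q)$ is the cross-covariance of $Y$ with the tilting field. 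The weights $\zeta([0,q_{p-1}])$ appear because the exponential reweighting competes with the Poisson--Dirichlet power law at each level, so the marks at level $p$ get shifted by $\zeta([0,q_{p-1}])$ times the increment rather than the full increment; this is precisely why the answer involves the $\zeta$-weighted sum making up $\hat S_\zeta$ and not the plain telescoping quantity $q_*\xi'(q_*)$. Evaluating, this drift equals $\sqrt{R^2+1}\,\bigl(\hat S_\zeta-\xi'(q_*)(1-q_*)\bigr)$, so after the invariance the residual term becomes $-\bigl(Y'(h_\alpha)+\sqrt{R^2+1}(\hat S_\zeta-\xi'(q_*)(1-q_*))\bigr)\tfrac{s^2}{2\sqrt{R^2+1}}$, whose deterministic part is $-(\hat S_\zeta-\xi'(q_*)(1-q_*))\tfrac{s^2}{2}$; adding this to the existing $-\tfrac12 s^2[1+\xi'(q_*)(1-q_*)]$ produces exactly the coefficient $1+\hat S_\zeta$ on the right of \eqref{eq:limR}.

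Finally I would pass to the limit in this exact identity. On the domain $|s|\le C$ the leftover terms are controlled: $\bigl|Y'(h_\alpha)\tfrac{s^2}{2\sqrt{R^2+1}}\bigr|\le |Y'(h_\alpha)|\tfrac{C^2}{2\sqrt{R^2+1}}$ and the quartic term is $O(C^4/R^2)$. Using $|e^x-1|\le |x|e^{|x|}$, telescoping over the $k$ replicas (each ratio is bounded by $C^{e_{l}}$), and the denominator lower bound $P(\int\cdots\le 1/L)\le K(k)/L^k$ to control the reciprocal partition functions, I would show that replacing these leftover terms by $0$ changes $F^{R,C,E}(\zeta)$ by at most $O(C^4/R^2)+O(C^2/\sqrt{R})$. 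The constants here depend only on $C$, $R$, the exponential moments of the fixed Gaussian fields $Y'$ and $Z$, and the denominator constant $K(k)$, none of which involve the cascade parameters; hence the bound is uniform over all finite-RSB $\zeta$, in particular along $\zeta_i\to\zeta^*$, and letting $R\to\infty$ yields \eqref{eq:limR} together with the stated uniformity. (It is also convenient to note $\hat S_\zeta=\xi'(q_*)-\int_0^1 l\xi'(l)\,\zeta(\td l)$, which is weak$^*$-continuous, so the $\hat S$ appearing on the right causes no trouble in the eventual exchange of limits.)

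The obstacle I expect is twofold. Conceptually, the point to get right is that the $\sqrt{R^2+1}$ scaling of the tilt produces a drift proportional to $\sqrt{R^2+1}$, which cancels exactly against the $1/\sqrt{R^2+1}$ in the residual term to leave a finite, $\zeta$-dependent shift; and that this shift is $\hat S_\zeta-\xi'(q_*)(1-q_*)$ rather than $q_*\xi'(q_*)$ precisely because of the Poisson--Dirichlet size-biasing at each level of the cascade. Technically, the delicate part is the uniformity in $\zeta$: one must check that every error estimate can be phrased using only the fixed covariance data and the $\zeta$-uniform denominator bound, since the approximating measures $\zeta_i$ may have supports whose atoms cluster or migrate as $i\to\infty$.
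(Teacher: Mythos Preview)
Your main step—applying the Bolthausen--Sznitman invariance to the tilt $e^{\sqrt{R^2+1}\,Y(h_\alpha)}$, computing the level-by-level drift, and observing that the $\sqrt{R^2+1}$ in the drift cancels the $1/\sqrt{R^2+1}$ in the residual term to produce exactly the coefficient $1+\hat S_\zeta$—is precisely what the paper does, and your drift computation is correct. Where you diverge from the paper is in the error control for the passage $R\to\infty$.

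You propose to combine $|e^x-1|\le |x|e^{|x|}$ with the denominator bound $P(\int\cdots\le 1/L)\le K(k)/L^k$. That denominator lemma, however, was proved for the sphere Gibbs measure $\nu_N^R$, not for the RPC partition function $\sum_\alpha w_\alpha\int_{-C}^C\cdots\,ds$ that appears after the invariance; you would need to establish an RPC analogue and then verify that its constant $K(k)$ is genuinely independent of the cascade parameters. That is plausible but is an extra step you have not carried out.

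The paper sidesteps this entirely by a \emph{second} use of Bolthausen--Sznitman. After the first invariance one must control $\mathbb{E}\bigl|\tfrac{Y_\infty-Y_R}{Y_\infty}\bigr|$, where $Y_R$, $Y_\infty$ are the RPC-averaged partition functions with and without the residual $Y$- and quartic terms. The paper factors term-by-term,
\[
|Y_\infty-Y_R|\;\le\;\sum_\alpha w_\alpha\, I_\alpha\Bigl[e^{|Y(h_\alpha)|\frac{C^2}{2\sqrt{R^2+1}}+\frac{(\xi'(1)-q_*\xi'(q_*))C^4}{4(R^2+1)}}-1\Bigr],
\qquad I_\alpha=\int_{-C}^C e^{-\frac12[1+\hat S_\zeta]s^2+Z(h_\alpha)s}\,ds,
\]
so the ratio is a tilted average with tilt $I_\alpha$, a function of $Z$ alone. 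Applying the invariance to $(w_\alpha I_\alpha, Z(h_\alpha))$ absorbs the denominator; since $Y$ is independent of $Z$, the $Y$-field is untouched, and the error reduces to $\mathbb{E}\bigl[e^{|y|\frac{C^2}{2\sqrt{R^2+1}}+\cdots}-1\bigr]$ for a single Gaussian $y$ of variance $q_*\xi'(q_*)\le\xi'(1)$. No cascade data survives in this expression, so uniformity in $\zeta$ is immediate rather than something that has to be checked. Your route can likely be completed, but the paper's second invariance is the device that makes the uniformity claim effortless.
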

\begin{proof}
We will prove the result in the single replica, single cavity case with $e_{1,1}=1$. The proof in all other cases will be similar with marginally more involved computations.
Let $N$ denote the numerator of the expression in the first line of \eqref{eq:limR} under these conditions and let $D$ denote the denominator of said expression. We define $S$ to be $\sum_{\alpha} w_\alpha  e^{\sqrt{R^2+1}Y(h_\alpha)}$. Clearly we can rewrite the expression in the first line of \eqref{eq:limR} as $\mathbb{E} \frac{N S^{-1}}{D S^{-1}}$. Clearly, we will now be able to compute the quantity better if we understand the tilted Gibbs weight
\begin{equation}
    \frac{w_\alpha  e^{\sqrt{R^2+1} Y(h_\alpha)}}{\sum_{\alpha} w_\alpha  e^{\sqrt{R^2+1} Y(h_\alpha)}}
\end{equation}
Since we are just tilting by the exponential of independent Gaussians associated to each $\alpha$, at each level of the Ruelle probability cascade we  apply the standard Bolthausen-Sznitman invariance \cite{RPC} level by level which states that if $g_\alpha$ are i.i.d. $N(0,1)$ random variables independent of the  $u_\alpha$, which are the weights of a Poisson-Dirichlet process with parameter $m$,  then
\begin{equation}
    (e^{mt^2/2}u_\alpha e^{t g_\alpha}, g_\alpha - m t) \stackrel{d}{=} (u_\alpha, g_\alpha)
\end{equation}

If one applies the analogue of the above procedure for our RPC, we shift the Gaussian $Y(h_\alpha)$ by mean equal to $\sqrt{R^2+1} \sum_{i=1}^{r}(q_r\xi'(q_r) - q_{r-1} \xi'(q_{r-1})) \zeta([0,q_r])   $. Note that since we consider normalized weights, we cancel out the factor of $e^{mt^2/2}$ multiplying the weights of a Poisson-Dirichlet process.  One important remark to make is that even though this shift scales by $\sqrt{R^2+1}$ this multiplier is exactly canceled out by the $\frac{1}{\sqrt{R^2+1}}$ scaling factor associated to the only remaining term involving the $Y(h_\alpha)$. The net effect of this is to renormalize the variance of the Gaussian term associated to each spin. To simplify notation, recall $\hat S_\zeta:=\xi'(q_*)(1-q_*) +\sum_{i=1}^{r}(q_r\xi'(q_r) - q_{r-1} \xi'(q_{r-1})) \zeta([0,q_{r-1}]) $

As a result of applying said distributional equivalence, we see that the top line of \eqref{eq:limR} is given as
\begin{equation} \label{eq:b4rlimit}
  \lim_{R \rightarrow \infty} \mathbb{E} \frac{\sum_{\alpha} w_{\alpha}  \int_{-C}^C s e^{-\frac{1}{2}s^2[1+ \hat{S}_\zeta] + Z(h_\alpha) s - Y(h_\alpha)\frac{s^2}{2\sqrt{R^2+1}}+\frac{(\xi'(1) - q_* \xi'(q_*))s^4}{4(R^2+1)} } \td s}{\sum_{\alpha}w_{\alpha}  \int_{-C}^C e^{-\frac{1}{2}s^2[1+ \hat{S}_\zeta] + Z(h_\alpha) s- Y(h_\alpha)\frac{s^2}{2\sqrt{R^2+1}}+\frac{(\xi'(1) - q_* \xi'(q_*))s^4}{4(R^2+1)}  } \td s}
\end{equation}

We will show that the above limit is
\begin{equation}\label{eq:aftrrlimit}
    \mathbb{E}\left[ \frac{\sum_{\alpha} w_{\alpha}  \int_{-C}^C s e^{-\frac{1}{2}[1+\hat{S}_\zeta]s^2 +Z(h_\alpha) s} \td s}{\sum_{\alpha} w_{\alpha}  \int_{-C}^C  e^{-\frac{1}{2}[1+\hat{S}_\zeta]s^2+Z(h_\alpha) s} \td s}\right]
\end{equation}
and that the above limit can be taken uniformly in $\zeta$.

We denote the numerator of \eqref{eq:b4rlimit} as $X_R$ and the denominator as $Y_R$, while we denote the numerator of \eqref{eq:aftrrlimit} as $X_{\infty}$ and the denominator as $Y_{\infty}$.

We will bound the difference
\begin{equation}
    \mathbb{E}\left[\left|\frac{X_R}{Y_R} - \frac{X_\infty}{Y_\infty}\right|\right] \le \mathbb{E}\left[\left|\frac{X_R(Y_\infty - Y_R)}{Y_R Y_\infty}\right| + \left|\frac{X_R -X_\infty}{Y_\infty}\right|\right]
\end{equation}

We will bound $\mathbb{E}\left|\frac{X_R(Y_\infty - Y_R)}{Y_R Y_\infty}\right|$, the bound on the other part will be similar.

We first remark that $\frac{X_R}{Y_R}\le C$ as this is an upper bound on every individual ratio

$$
    \frac{  \int_{-C}^C s e^{-\frac{1}{2}[1+\hat{S}_\zeta]s^2 +Z^i(h_\alpha) s} \td s}{\int_{-C}^C  e^{-\frac{1}{2}[1+\hat{S}_\zeta]s^2+Z^i(h_\alpha) s} \td s}
$$
and each individual term in the denominator is positive.

It will now suffice to uniformly bound the quantity
$$
\mathbb{E}\left|\frac{Y_\infty -Y_R}{Y_\infty} \right|
$$

First notice that we are able to bound
$$
\begin{aligned}
&|Y_{\infty} - Y_R|=\\
 &\left|\int_{-C}^C e^{-\frac{1}{2}s^2[1+ \hat{S}_\zeta] + Z(h_\alpha) s- Y(h_\alpha)\frac{s^2}{2\sqrt{R^2+1}}+\frac{(\xi'(1) - q_* \xi'(q_*))s^4}{4(R^2+1)}  } \td s- \int_{-C}^C  e^{-\frac{1}{2}[1+\hat{S}_\zeta]s^2+Z(h_\alpha) s} \td s\right| \le\\
 & \left|\int_{-C}^C  e^{-\frac{1}{2}[1+\hat{S}_\zeta]s^2+Z(h_\alpha) s} \td s \right| \left[e^{|Y(h_\alpha)|\frac{C^2}{2\sqrt{R^2+1}}+ \frac{(\xi'(1) - q_* \xi'(q_*))C^4}{4(R^2+1)}}-1\right]
\end{aligned}
$$

Thus, it suffices to uniformly control
$$
\mathbb{E} \frac{\sum_{\alpha} w_{\alpha}  \int_{-C}^C  e^{-\frac{1}{2}[1+\hat{S}_\zeta]s^2+Z(h_\alpha) s} \td s \left[e^{|Y(h_\alpha)|\frac{C^2}{2\sqrt{R^2+1}}+ \frac{(\xi'(1) - q_* \xi'(q_*))C^4}{4(R^2+1)}}-1\right]}{\sum_{\alpha} w_{\alpha}  \int_{-C}^C  e^{-\frac{1}{2}[1+\hat{S}_\zeta]s^2+Z^i(h_\alpha) s} \td s}
$$

One can apply the Bolthausen-Sznitman invariance principle for RPC's on the random variable pair $(w_\alpha \int_{-C}^C  e^{-\frac{1}{2}[1+\hat{S}_\zeta]s^2+Z(h_\alpha) s} \td s, Z(h_\alpha))$ and, noting that $Y(h_\alpha)$ and $Z(h_\alpha)$ are independent of one another will give us that this is equal to
\begin{equation}\label{eq:redR}
\mathbb{E} \sum_\alpha w_\alpha\left[e^{|Y(h_\alpha)|\frac{C^2}{2\sqrt{R^2+1}}+ \frac{(\xi'(1) - q_* \xi'(q_*))C^4}{4(R^2+1)}}-1\right]
\end{equation}
for $w_\alpha$ the weights of an RPC with the same parameter.
This is bounded by the $L_1$ norm of the function $e^{|y|\frac{C^2}{2\sqrt{R^2+1}}+ \frac{(\xi'(1) - q \xi'(q))C^4}{4(R^2+1)}}-1$ where y is a Gaussian with variance $q \xi'(q)$. This clearly goes to 0 as R goes to $\infty$ and we have derived the infinite limit.
\end{proof}

To summarize what we have accomplished here, we have shown that
\begin{equation}
      \lim_{N \rightarrow \infty} \mathbb{E} \prod_{l=1}^{k}\langle(\sigma_1^l)^{e_{1,l}} \ldots (\sigma_n^l)^{e_{l,n}}\rangle= \lim_{C \rightarrow \infty} \lim_{R \rightarrow \infty} \lim_{i \rightarrow \infty}F^{R,C,E}(\zeta_i) = \lim_{C \rightarrow \infty} \lim_{i \rightarrow \infty}\lim_{R \rightarrow \infty} F^{R,C,E}(\zeta_i)
\end{equation}

Since each $\zeta_i$ satisfies finite RSB, the limit $F^{C,E}(\zeta_i)=\mathbb{E}\prod_{l=1}^k\left[ \frac{\sum_{\alpha_l} w_{\alpha_l} \prod_{i=1}^n \int_{-C}^C s_i^{e_{l,i}} e^{-\frac{1}{2}[1+\hat S_\zeta]s_i^2 +Z^i\left(h_{\alpha_l}\right) s_i} \td s_i}{\sum_{\alpha_l} w_{\alpha_l} \prod_{i=1}^n \int_{-C}^C  e^{-\frac{1}{2}[1+\hat S_\zeta]s_i^2+Z^i\left(h_{\alpha_l}\right) s_i} \td s_i}\right] $ is the value of $\lim_{R \rightarrow \infty} F^{R,C,E}(\zeta_i)$ where the $w_\alpha$ are the weights of some appropriate RPC.

 \subsection{The $C \rightarrow \infty$ limit}
 There is a natural guess for the $C \rightarrow \infty$ limit. It suffices to show that this limit exists and is uniform in the functional $\zeta$.

 \begin{lm} \label{lm:Cremoval}
 Assume we are evaluating $F^{C,E}(\zeta)$ so that the overlap distribution $\zeta$ satisfies finite RSB. Let the support of the measure $\zeta$ be $q_0 \le q_1\le \ldots \le q_r=q_*$. We then have the following result
 \begin{equation}
 \begin{aligned}
 &\lim_{C \rightarrow \infty} \mathbb{E}\left[\prod_{l=1}^{k} \frac{\sum_{\alpha_l} w_{\alpha_l} \prod_{i=1}^n \int_{-C}^C s_i^{e_{l,i}} e^{-\frac{1}{2}[1+\hat S_\zeta]s_i^2 +Z^i\left(h_{\alpha_l}\right) s_i} \td s_i}{\sum_{\alpha_l} w_{\alpha_l} \prod_{i=1}^n \int_{-C}^C  e^{-\frac{1}{2}[1+\hat S_\zeta]s_i^2+Z^i\left(h_{\alpha_l}\right) s_i} \td s_i}\right] =  \mathbb{E}\prod_{l=1}^k\left[ \frac{\sum_{\alpha_l} w_{\alpha_l} \prod_{i=1}^n \int_{-\infty}^\infty s_i^{e_{l,i}} e^{-\frac{1}{2}[1+\hat S_\zeta]s_i^2 +Z^i(h_\alpha) s_i} \td s_i}{\sum_{\alpha_l} w_{\alpha_l} \prod_{i=1}^n \int_{-\infty}^\infty  e^{-\frac{1}{2}[1+\hat S_\zeta]s_i^2+Z^i\left(h_{\alpha_l}\right) s_i} \td s_i}\right]=\\
 &=\mathbb{E}\prod_{l=1}^{k}\left[ \frac{\sum_{\alpha_l} w_{\alpha_l} \prod_{i=1}^n \rho^{Z^{i}\left(h_{\alpha_l}\right)}_{\zeta}(s^{e_{l,i}}) e^{\frac{(Z^i\left(h_{\alpha_l}\right))^2}{2[1+\hat S_\zeta]}}}{\sum_{\alpha_l} w_{\alpha_l} \prod_{i=1}^n  e^{\frac{\left(Z^i\left(h_{\alpha_l}\right)\right)^2}{2[1+\hat S_\zeta]}}}\right] = \mathbb{E}\prod_{l=1}^k[\sum_{\alpha_l} w_{\alpha_l} \prod_{i=1}^n \rho_{\zeta}^{(Z^i)'\left(h_{\alpha_l}\right)}(s^{e_{l,i}})]
 \end{aligned}
 \end{equation}
  where the construction of the random variables $(Z^i)'$ is as given in the appendix \ref{appendix} and the measure $\rho_{\zeta}^{.}$ is as in \eqref{eq:defrho}  .

 Moreover, the limit in $C$ can be taken uniformly in $\zeta$ for a sequence of finite RSB measures approaching $\zeta^*$.
 \end{lm}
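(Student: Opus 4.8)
The plan is as follows. Of the three equalities in the displayed chain, the first --- replacing $[-C,C]$ by $\mathbb R$ --- is the only genuine limit; the remaining two are exact identities valid at every finite-RSB $\zeta$. Before treating the limit I would record four a priori facts, all uniform along a sequence $\zeta_i\to\zeta^*$ of finite-RSB measures. (a) Since $x\mapsto x\xi'(x)$ is nondecreasing on $[0,1]$, every term in $\hat S_\zeta$ is nonnegative and at most $q_*\xi'(q_*)$, so $1\le 1+\hat S_\zeta\le 1+\xi'(1)$. (b) Evaluating the integral defining $S_\zeta$ in Theorem \ref{them:mainthm} on the step function $\zeta([0,\cdot])$ gives $\hat S_\zeta=S_\zeta-(\xi'(1)-\xi'(q_*))$, hence $1+\hat S_\zeta-\xi'(q_*)=1+S_\zeta-\xi'(1)$. (c) $\mathrm{Var}(Z^i(h_\alpha))=\xi'(\langle h_\alpha,h_\alpha\rangle)=\xi'(q_*)$ is a fixed bounded number. (d) The strict positivity $1+\hat S_\zeta>\xi'(q_*)$ holds uniformly along the sequence (it is implied by $q_*\xi'(q_*)<1$, since then $\hat S_\zeta\ge\xi'(q_*)(1-q_*)>\xi'(q_*)-1$); this is what keeps the Ruelle-cascade moments below finite.

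For the $C\to\infty$ limit I would first complete the square. Set $a:=\tfrac12(1+\hat S_\zeta)\ge\tfrac12$, and for $e\ge0$ write $N_C^{i,e}(\alpha):=\int_{-C}^{C}s^{e}e^{-as^2+Z^i(h_\alpha)s}\,\td s$, $D_C^i(\alpha):=N_C^{i,0}(\alpha)$ (allowing $C=\infty$), so the factor attached to replica $l$ equals $A_C^{(l)}/B_C$ with $A_C^{(l)}=\sum_\alpha w_\alpha\prod_i N_C^{i,e_{l,i}}(\alpha)$ and $B_C=\sum_\alpha w_\alpha\prod_i D_C^i(\alpha)$. Three elementary Gaussian estimates, with constants depending only on the exponents and the bounds in (a), do the work: (i) $D_C^i(\alpha)\ge 2e^{-a}\ge 2e^{-(1+\xi'(1))/2}=:c_0>0$ for all $C\ge1$ (bound the integrand on $[-1,1]$ and use $\sinh z/z\ge1$); (ii) $|N_C^{i,e}(\alpha)|\le K_e\,(1+|Z^i(h_\alpha)|^{e})\,D_C^i(\alpha)$ uniformly in $C$ (a conditional-moment bound for a Gaussian restricted to $[-C,C]$); and (iii) $|N_\infty^{i,e}(\alpha)-N_C^{i,e}(\alpha)|\le e^{Z^i(h_\alpha)^2/(4a)}g_e(C)$ whenever $C\ge 2|Z^i(h_\alpha)|$, with $g_e(C)\to0$ faster than any polynomial and $g_e$ independent of $\zeta$. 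Since $D_\infty^i(\alpha)=\sqrt{\pi/a}\,e^{Z^i(h_\alpha)^2/(4a)}$ and, by (d), $\sum_\alpha w_\alpha\prod_i e^{Z^i(h_\alpha)^2/(4a)}<\infty$ almost surely (a standard cascade estimate, as used in \cite{JaAuSpin} and, in the same spirit, in the proof of the preceding lemma), dominated convergence over the countable set of atoms gives $A_C^{(l)}\to A_\infty^{(l)}$ and $B_C\to B_\infty\ge c_0^{\,n}>0$ almost surely; hence $\prod_{l=1}^{k}A_C^{(l)}/B_C\to\prod_{l=1}^{k}A_\infty^{(l)}/B_\infty$ almost surely.

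To move the expectation through the limit I would invoke dominated convergence with the $C$-free majorant $V:=c_0^{-nk}(\pi/a)^{nk/2}\sum_{\alpha_1,\dots,\alpha_k}\prod_{l}w_{\alpha_l}\prod_i e^{Z^i(h_{\alpha_l})^2/(4a)}K_{e_{l,i}}(1+|Z^i(h_{\alpha_l})|^{e_{l,i}})$, which dominates $|\prod_l A_C^{(l)}/B_C|$ for every $C$ by (i)--(ii) together with the bound $\mu_C(\alpha)\le c_0^{-n}(\pi/a)^{n/2}w_\alpha\prod_i e^{Z^i(h_\alpha)^2/(4a)}$ on the tilted weights $\mu_C(\alpha)\propto w_\alpha\prod_i D_C^i(\alpha)$. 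That $\mathbb E[V]<\infty$, uniformly in $\zeta$ along the sequence, is a $k$-replica Ruelle-cascade moment computation of exactly the type carried out in the proof of the preceding lemma: by Fubini the relevant sums become expectations over finitely many cascade replicas of products of $e^{cZ^i(h_\cdot)^2}$-type functions, finite by (d) and controlled uniformly by (a) and (c). For uniformity of the limit I would telescope $\prod_l A_C^{(l)}/B_C-\prod_l A_\infty^{(l)}/B_\infty$ and bound $|A_C^{(l)}-A_\infty^{(l)}|$, $|B_C-B_\infty|$ using (iii) on $\{C\ge 2|Z^i(h_\alpha)|\}$ and (ii) on its complement; the resulting bound is $g(C)$ times a uniformly integrable $e^{cZ^2}$-weighted cascade series, plus the restriction of such a series to the atoms at which some $|Z^i(h_\alpha)|>C/2$, and both tend to $0$ uniformly in $\zeta$ because the rate $g_e$ and the threshold $2|Z^i(h_\alpha)|$ are $\zeta$-independent. \textbf{This uniform-in-$(C,\zeta)$ integrability of the ratio --- equivalently the $e^{cZ^2}$-weighted Ruelle-cascade moment bound --- is the main obstacle}, and it is precisely where positivity (d) is indispensable: without $1+\hat S_\zeta>\xi'(q_*)$ the Gaussian integrals in that computation diverge.

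Finally, the two algebraic equalities. Completing the square gives $N_\infty^{i,e}(\alpha)=D_\infty^i(\alpha)\cdot\big(\int s^{e}e^{-as^2+Z^i(h_\alpha)s}\,\td s\big)/\big(\int e^{-as^2+Z^i(h_\alpha)s}\,\td s\big)$, while performing the Gaussian $y$-integration in \eqref{eq:defrho} yields $\rho_\zeta^{f}(s^{e})=\big(\int s^{e}e^{-\frac{1+\hat S_\zeta}{2}s^2+fs}\,\td s\big)/\big(\int e^{-\frac{1+\hat S_\zeta}{2}s^2+fs}\,\td s\big)$ --- it is here that fact (b), $1+S_\zeta-(\xi'(1)-\xi'(q_*))=1+\hat S_\zeta$, matches the two quadratic coefficients --- so $N_\infty^{i,e_{l,i}}(\alpha)=D_\infty^i(\alpha)\,\rho_\zeta^{Z^i(h_\alpha)}(s^{e_{l,i}})$; since $D_\infty^i(\alpha)=(2\pi/(1+\hat S_\zeta))^{1/2}e^{Z^i(h_\alpha)^2/(2(1+\hat S_\zeta))}$, the constant $(2\pi/(1+\hat S_\zeta))^{n/2}$ cancels between $A_\infty^{(l)}$ and $B_\infty$, producing the $\rho_\zeta^{Z^i(h_\alpha)}$ form. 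For the last equality I would reweight the cascade by $\prod_i e^{Z^i(h_\alpha)^2/(2(1+\hat S_\zeta))}$; the tilt is integrable by (d), and by the construction in Appendix \ref{appendix} the pair (reweighted cascade, $(Z^i)$) equals in law (a cascade, $(Z^i)'$), under which identification $\big(\sum_\alpha w_\alpha\prod_i e^{Z^i(h_\alpha)^2/(2(1+\hat S_\zeta))}\rho_\zeta^{Z^i(h_\alpha)}(s^{e_{l,i}})\big)/\big(\sum_\alpha w_\alpha\prod_i e^{Z^i(h_\alpha)^2/(2(1+\hat S_\zeta))}\big)$ becomes $\sum_{\alpha}w_{\alpha}\prod_i\rho_\zeta^{(Z^i)'(h_\alpha)}(s^{e_{l,i}})$. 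Both identities involve no limit, so their uniformity in $\zeta$ is immediate.
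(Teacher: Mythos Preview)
Your argument and the paper's diverge at the heart of the $C\to\infty$ step. The paper does \emph{not} pass through a $C$-independent majorant. Instead it uses the crude pointwise bound $|X_C/Y_C|\le C$ (each individual ratio is at most $C$), reduces the problem to showing that $\mathbb E\bigl[(Y_\infty-Y_C)/Y_\infty\bigr]$ decays faster than $1/C$, and then --- crucially --- applies the Bolthausen--Sznitman invariance to the tilt $F(Z)=e^{Z^2/(2(1+\hat S_\zeta))}$ \emph{before} estimating, so that this expectation collapses to $\mathbb E\,G_C(z')$ for a single tilted variable $z'$. The rate then follows from $G_C(x)\le e^{-C/4}$ on $\{|x|\le C/2\}$ together with the uniform second-moment bound on $z'$ (Lemma~\ref{lm:2bound}), which holds unconditionally because $1+S_0=1+\sum_p \zeta(\{q_p\})(1-q_p)\xi'(q_p)>0$ for \emph{every} probability measure $\zeta$ (Lemma~\ref{lm:recursion}). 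No cascade moment beyond the first is ever needed.

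Your route, by contrast, rests on the integrability of the majorant $V$, which you justify through (d): $1+\hat S_\zeta>\xi'(q_*)$, equivalently $\int q\,\xi'(q)\,d\zeta(q)<1$. This is an additional hypothesis that the paper neither assumes nor needs, and it can fail along finite-RSB approximants: for a one-level $\zeta=m\delta_0+(1-m)\delta_{q_*}$ one has $1+\hat S_\zeta-\xi'(q_*)=1-(1-m)q_*\xi'(q_*)$, which is negative as soon as $(1-m)q_*\xi'(q_*)>1$. The always-true positivity is the weaker $1+S_0>0$, and that is exactly what makes the tilted field $z'$ well defined with all moments bounded --- but it does not by itself give you the $k$-replica moments of $\sum_\alpha w_\alpha e^{Z(h_\alpha)^2/(4a)}$ that your dominated-convergence majorant requires. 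In short, the paper's trick of tilting first and estimating afterwards is what replaces your condition (d); without it, the uniform integrability of $V$ is the genuine gap in your argument. Your treatment of the two algebraic identities at the end is fine and matches the paper.
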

 \begin{proof}
 We will show this computation in the case that $n=1,l=1$ and $e_{1,1}=1$, more general $E$ can be done using similar computations.
 Let $X_C$ denote the numerator
 $\sum_{\alpha} w_{\alpha} \int_{-C}^C s e^{-\frac{1}{2}[1+\hat{S}_\zeta]s^2 +Z(h_\alpha) s} \td s$,
 let $Y_C$ denote the denominator $\sum_{\alpha} w_{\alpha} \int_{-C}^C  e^{-\frac{1}{2}[1+\hat{S}_\zeta]s^2 +Z(h_\alpha) s} \td s$. Correspondingly, let $X_\infty$ be the numerator of the infinite limit $\sum_{\alpha} w_{\alpha} \int_{-\infty}^\infty s e^{-\frac{1}{2}[1+\hat{S}_\zeta]s^2 +Z(h_\alpha) s} \td s$ and $Y_\infty$ be the denominator of the infinite limit $\sum_{\alpha} w_{\alpha} \int_{-\infty}^\infty e^{-\frac{1}{2}[1+\hat{S}_\zeta]s^2 +Z(h_\alpha) s} \td s$.

 As before, we will bound
 $$
 \mathbb{E}\left|\frac{X_C}{Y_C} - \frac{X_\infty}{Y_\infty}\right| \le \mathbb{E} \left|\frac{X_C|Y_\infty - Y_C|}{Y_C Y_\infty}\right| + \left| \frac{ |X_\infty-X_C|}{Y_\infty}\right|
 $$

 We will only control the value of the former quantity. The methods in controlling the latter quantity would be roughly the same.

 We first bound $\frac{X_C}{Y_C}$ by C. We see that then it would suffice to show that $\mathbb{E}\left|\frac{Y_\infty - Y_C}{Y_\infty} \right|$ decays at a rate faster than $\frac{1}{C}$.
 Let $G_C(x)$ be the function $\frac{\int_{(-\infty,\infty) \setminus [-C,C]}e^{-\frac{1}{2}[1+\hat S_\zeta]s^2 +x s} \td s}{\int_{(-\infty,\infty)}e^{-\frac{1}{2}[1+\hat S_\zeta]s^2 +x s} \td s}$.
Also let $F(x)$ be the function $\int_{-\infty}^{\infty}e^{-\frac{1}{2}[1+\hat S_\zeta]s^2 +x s} \td s$.

We see that we can then write $\mathbb{E}\left|\frac{Y_\infty - Y_C}{Y_\infty} \right|$ as $\mathbb{E} \frac{\sum_{\alpha} w_\alpha G_C(Z(h_\alpha)) F(Z(h_\alpha))}{\sum_{\alpha}w_\alpha F(h_\alpha)}$. We would now apply the Bolthausen-Sznitman invariance principle to

$(w_\alpha F(Z(h_\alpha))$ to see that the expectation is equal to $\mathbb{E} \sum_{\alpha} w_\alpha G_C(Z'(h_\alpha)) = \mathbb{E} G_C(z')$ for some random variables $Z'(h_\alpha)$ and $z'$ is a random variable with the distribution of a single $Z'(h_\alpha)$. The random variables $Z'(h_\alpha)$ have been constructed in the Appendix \ref{appendix}.

One can observe that the function $G_C(x)$ is always less than 1 and ,furthermore, one can show that $G_C(x) \le e^{-C/4}$ when $|x| \le C/2$. Thus, we see that $\mathbb{E} G_C(z') \le \frac{B}{C^2}$ for some constant $B$ provided one can prove that $\mathbb{P}(z' \ge \frac{C}{2}) \le \frac{B}{C^2}$. By Lemma \ref{lm:2bound} we have that $\mathbb{E}[z'^2] \le K$ for some constant $K$ independent of the approximator, $\zeta_i$. Then, Markov's inequality shows that independently of $\zeta_i$, we have $\mathbb{P}(z' \ge \frac{C}{2}) \le \frac{4K}{C^2}$. This shows that the limit as $C \rightarrow \infty$ is uniform in the $\zeta_i$.

 \end{proof}

 To summarize again what we have established, we have
 \begin{equation}
      \lim_{N \rightarrow \infty} \mathbb{E} \prod_{l=1}^k \langle (\sigma_1^l)^{e_{l,1}} \ldots (\sigma_n^l)^{e_{l,n}}\rangle= \lim_{C \rightarrow \infty} \lim_{R \rightarrow \infty} \lim_{i \rightarrow \infty}F^{R,C,E}(\zeta_i) = \lim_{i \rightarrow \infty}\lim_{C \rightarrow \infty} \lim_{R \rightarrow \infty} F^{R,C,E}(\zeta_i)
\end{equation}
and that when $\zeta_i$ satisfies Finite Replica Symmetry Breaking, we have the explicit expression

$$\lim_{C \rightarrow \infty} \lim_{R \rightarrow \infty} F^{R,C,E}(\zeta_i) = \mathbb{E}\prod_{l=1}^k[\sum_{\alpha_l} w_{\alpha_l} \prod_{i=1}^n \rho_{\zeta}^{(Z^i)'\left(h_{\alpha_l}\right)}(s^{e_{l,i}})]$$ where the $w_\alpha$  and $(Z^i)'(h_\alpha)$ are calculated with respect to an RPC with parameters coming from $\zeta_i$. Our final step is to give a succinct representation as $i \rightarrow \infty$.

     \section{The local field representation of the spin distribution}

     Recall the notation from Theorem \ref{them:mainthm} and the preceding discussion. We have

     \begin{thm} \label{thm:mainpde}
     Consider a spherical spin glass $H_N$ whose limiting overlap distribution is given by $\zeta_*$ with $\sup\text{supp } \zeta_*= q_*$ and limiting Gibbs' measure $\nu$. Then we have that
     \begin{equation}
         \lim_{N\rightarrow \infty}\mathbb{E}[\prod_{l=1}^k \langle \prod_{i=1}^n (\sigma_i^l)^{e_{l,i}}\rangle_{H_N}] = \mathbb{E}[[\prod_{l=1}^k\prod_{i=1}^n \rho_{\zeta}^{[1+\hat S_\zeta]u_x(q_*, \mathcal{X}^i_{q_*}(\mathcal{\sigma}^l))}(s^{e_{l,i}})]
     \end{equation}
     where the second expectation is taken with respect to the randomness of the process $\mathcal{X}$ and $\mathcal{\sigma}^l$ are distributed i.i.d from $\nu^{\otimes \infty}$ as in Theorem \ref{them:mainthm}. We remark that if $\sigma^l$ are distributed according to $\nu^{\otimes \infty}$, then the law of the overlap distribution is $\zeta_*$
     \end{thm}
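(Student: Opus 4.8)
The plan is to combine the reductions already carried out with the Ruelle-probability-cascade computation of \cite{JaAuSpin}. We have established
\[
\lim_{N\to\infty}\mathbb{E}\Big[\prod_{l=1}^k\langle\prod_{j=1}^n(\sigma_j^l)^{e_{l,j}}\rangle_{H_N}\Big]=\lim_{i\to\infty}\mathbb{E}\prod_{l=1}^k\Big[\sum_{\alpha_l}w_{\alpha_l}\prod_{j=1}^n\rho_{\zeta_i}^{(Z^j)'(h_{\alpha_l})}(s^{e_{l,j}})\Big],
\]
where $(\zeta_i)$ is a sequence of finite-RSB probability measures with $\zeta_i\to\zeta_*$ weak-$*$ and, as we may additionally arrange, $\sup\mathrm{supp}\,\zeta_i\to q_*$, and where the RPC weights $w_{\alpha_l}$, the tilted fields $(Z^j)'$ (with the $Z^j$ independent Gaussian fields of covariance $\xi'(\langle\cdot,\cdot\rangle)$), and the constant $\hat S_{\zeta_i}$ are built from the Ruelle cascade of overlap law $\zeta_i$ as in Lemma \ref{lm:Cremoval}. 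It suffices to (a) rewrite, for each fixed finite-RSB $\zeta_i$, the inner expectation as the local-field expression on the right-hand side of the theorem with $\zeta$ replaced by $\zeta_i$, and then (b) pass to the limit $i\to\infty$.

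For step (a) I would unwind the construction of $(Z^j)'$ from the appendix. Along a geodesic from the root to a leaf $\alpha$ of the cascade, $Z^j(h_\alpha)$ is a sum of independent centred Gaussian increments of variances $\xi'(q_m)-\xi'(q_{m-1})$ over the support points $0=q_0<\dots<q_r=\sup\mathrm{supp}\,\zeta_i$, and $(Z^j)'$ is obtained by absorbing the leaf tilt $\prod_{j=1}^n\exp\big((Z^j(h_\alpha))^2/(2[1+\hat S_{\zeta_i}])\big)$ into the cascade by the level-by-level Bolthausen-Sznitman invariance \cite{RPC}. Up to a constant that cancels in the ratio, this tilt is $\prod_{j=1}^n e^{u^{(\zeta_i)}(q_*,\,Z^j(h_\alpha))}$, where $u^{(\zeta_i)}$ is the Parisi solution of the initial value problem stated above for $\zeta_i$: since $\zeta_i([0,t])\equiv1$ on $(q_*,1]$, after the Hopf-Cole substitution the problem becomes a backward heat equation on that interval with Gaussian terminal data, so $u^{(\zeta_i)}(q_*,x)=\tfrac{x^2}{2(1+\hat S_{\zeta_i})}+\mathrm{const}$ once one verifies the identity $1+\hat S_{\zeta_i}=1+S_{\zeta_i}-(\xi'(1)-\xi'(\sup\mathrm{supp}\,\zeta_i))$. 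Absorbing the tilt then reproduces, at each level, exactly the Parisi recursion for $\zeta_i$, whose continuum limit along a refining partition of $[0,q_*]$ is the drift $\xi''(t)\zeta([0,t])u_x(t,\cdot)\,\td t$ of \eqref{def:locfield} driven by the Gaussian of \eqref{def:cavfield}. Hence $(Z^j)'(h_{\alpha_l})$ coincides in joint law with $\mathcal{X}^j_{q_*}(\sigma^l)$ under the identification of the cascade of overlap law $\zeta_i$ with the asymptotic Gibbs structure, and since $u_x^{(\zeta_i)}(q_*,x)=x/(1+\hat S_{\zeta_i})$ this equals $[1+\hat S_{\zeta_i}]u_x^{(\zeta_i)}(q_*,\mathcal{X}^j_{q_*}(\sigma^l))$. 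Thus the inner expectation equals $\mathbb{E}\big[\prod_{l=1}^k\prod_{j=1}^n\rho_{\zeta_i}^{[1+\hat S_{\zeta_i}]u_x^{(\zeta_i)}(q_*,\,\mathcal{X}^j_{q_*}(\sigma^l))}(s^{e_{l,j}})\big]$ with $\sigma^l$ i.i.d.\ from the asymptotic Gibbs measure of overlap law $\zeta_i$; this is the exact analogue of the cascade computation in \cite{JaAuSpin}, which I would follow.

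For step (b) I would check weak-$*$ continuity in $\zeta$ of each ingredient. The law of the whole overlap array is a continuous function of the overlap distribution (\cite{Panchenkobook}, as invoked to establish \eqref{eq:impeq}), so the asymptotic Gibbs structures for $\zeta_i$ converge to that for $\zeta_*$ and $\sigma^l$ becomes a sample from $\nu$ of overlap law $\zeta_*$; the Parisi solution map $\zeta\mapsto u^{(\zeta)}$ and its $x$-derivative are weak-$*$ continuous with $u_x$ uniformly Lipschitz and of linear growth (the regularity recalled in \cite{JaAuSpin}); combined with $\sup\mathrm{supp}\,\zeta_i\to q_*$ and $\zeta_i([0,t])\to\zeta_*([0,t])$ at continuity points of $t\mapsto\zeta_*([0,t])$, the drift coefficient converges and standard stability of the SDE \eqref{def:locfield} under such perturbations gives convergence in distribution of the local field built from $\zeta_i$ to the one built from $\zeta_*$; and $(f,\zeta)\mapsto\rho_\zeta^f(g)$ is jointly continuous for bounded continuous $g$ directly from \eqref{eq:defrho}, $\hat S_\zeta$ being continuous in $\zeta$. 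To bring $\lim_{i\to\infty}$ inside the outer expectation I would use the uniform-in-$\zeta_i$ second-moment bounds of the previous subsection together with the boundedness of the functions approximating $s^{e_{l,j}}$; as the $R$- and $C$-limits were already shown to be uniform in $\zeta_i$, the orders of limits may be permuted. Letting $i\to\infty$ then produces $\mathbb{E}\big[\prod_l\prod_j\rho_{\zeta_*}^{[1+\hat S_{\zeta_*}]u_x^{(\zeta_*)}(q_*,\mathcal{X}^j_{q_*}(\sigma^l))}(s^{e_{l,j}})\big]$, which is the asserted identity (writing $\zeta=\zeta_*$ as in the statement); the closing remark that $\sigma^l\sim\nu^{\otimes\infty}$ yields overlaps of law $\zeta_*$ is the Dobvysh-Sudakov / Ghirlanda-Guerra input recalled in the introduction.

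The hard part will be step (a): the bookkeeping that identifies the Bolthausen-Sznitman-tilted cascade field $(Z^j)'$ with the discretized local-field SDE, in particular checking that the increment extracted at each level is governed by $\xi''(t)\zeta([0,t])u_x(t,\cdot)$ and that the leaf tilt matches the terminal data of the Parisi problem --- this is precisely where the identity $1+\hat S_\zeta=1+S_\zeta-(\xi'(1)-\xi'(q_*))$, reconciling the combinatorial constant $\hat S_\zeta$ with the normalization $\tfrac{x^2}{2(1+S_\zeta)}$ in the initial condition, is needed. A secondary technical point is the stability of the SDE in step (b) when the support points of $\zeta_i$ move and $\zeta_i([0,t])$ converges only off a countable set; both issues are handled as in \cite{JaAuSpin}.
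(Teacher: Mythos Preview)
Your proposal is correct and follows the same two–step architecture as the paper: identify, for each finite-RSB approximant $\zeta_i$, the tilted cascade field $(Z^j)'(h_\alpha)$ with the local-field process $\mathcal{X}^j_{q_*}$, and then pass to the limit $\zeta_i\to\zeta_*$ by a continuity argument. The execution differs slightly in both steps. For step~(a) you propose to unwind the level-by-level Bolthausen--Sznitman tilt and match it to the discretized drift $\xi''(t)\zeta([0,t])u_x(t,\cdot)$; the paper instead applies Girsanov's theorem directly to the diffusion $\mathcal{Z}_t$, observing that the Radon--Nikodym density $\exp\big(\int_0^t \zeta([0,s])\,\td u(s,\mathcal{Z}_s)\big)$ collapses, for piecewise-constant $\zeta$, to the product of the cascade tilting factors (using the Cole--Hopf identity $u(q_k,\cdot)=X_k$). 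This bypasses the increment-by-increment bookkeeping you flag as the ``hard part'' and makes the identity $1+\hat S_\zeta = 1+S_\zeta - (\xi'(1)-\xi'(q_*))$ unnecessary to check separately. For step~(b) you appeal to generic SDE stability and the regularity package of \cite{JaAuSpin}; the paper gives a more self-contained argument: it first truncates $u_x$ at level $B$ and localizes the diffusion to $\{|\mathcal{X}_t|\le C\}$, using the submartingale property of $|\mathcal{X}_t|$ (Lemma~\ref{lem:submar}) together with Doob's maximal inequality and the uniform second-moment bound (Lemma~\ref{lm:2bound}) to show both truncations are uniform in $\zeta_i$, and only then invokes Stroock--Varadhan on the compactly supported coefficients (Lemma~\ref{lem:unifconv}). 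Your route would work, but the paper's localization step is what actually justifies the ``standard stability'' you invoke when the drift is unbounded.
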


In the discussion that follows, we will restrict our computations to the case that we have a single cavity $i=1$ and each power $e_{l,i}=1$. The function $\rho_{\zeta}^{[1+\hat{S}_\zeta]u_x(q_*, \mathcal{X}^i_{q_*})(\mathcal{\sigma}^l)}(s)$ can be seen to be $u_x(q_*, \mathcal{X}^i_{q_*}(\mathcal{\sigma}^l))$

    \begin{lm}
    Consider a measure $\zeta$ satisfying finite replica symmetry breaking and consider $Z(h_\alpha)$ from \eqref{def:MagField}, the tilted function $Z'(h_\alpha)$ as defined in Appendix \ref{appendix} and the associated PDE quantities $\mathcal{Z}, \mathcal{X}$ from \eqref{def:cavfield} and \eqref{def:cavfield}.
    We have the following equality in distribution
    \begin{equation}
    \begin{aligned}
    &(Z(h_\alpha))_{\alpha} =_d (\mathcal{Z}_{q_*}(h_\alpha))_{\alpha}\\
    &(Z(h_\alpha)')_{\alpha} =_d (\mathcal{X}_{q_*}(h_\alpha))_{\alpha}
    \end{aligned}
    \end{equation}
    \end{lm}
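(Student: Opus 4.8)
The statement splits into two assertions that I would handle by quite different means. The first, $(Z(h_\alpha))_\alpha =_d (\mathcal{Z}_{q_*}(h_\alpha))_\alpha$, is a pure Gaussian computation: both families are centered Gaussian fields indexed by the same ultrametric set $U = \mathrm{supp}(\nu)$, so it suffices to match covariances. The second, $(Z(h_\alpha)')_\alpha =_d (\mathcal{X}_{q_*}(h_\alpha))_\alpha$, is the spherical analogue of the identification of the tilted RPC field with the solution of the Parisi local-field SDE carried out in \cite{JaAuSpin}[Appendix A.1], and I would reduce it to a level-by-level Girsanov/Hopf--Cole computation along the (finite-depth) RPC tree.

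For the first assertion, using \eqref{def:cavfield} write $\mathcal{Z}_{q_*}(\sigma) = \int_0^{q_*}\sqrt{\xi''(t)}\,\td B_t(\sigma)$. From the covariance \eqref{eq:BrownMot} of $B$ one reads off $\td\langle B(\sigma_1),B(\sigma_2)\rangle_t = \mathbbm{1}\{t < (\sigma_1,\sigma_2)\}\,\td t$, whence
\begin{equation*}
\text{Cov}\bigl(\mathcal{Z}_{q_*}(\sigma_1),\mathcal{Z}_{q_*}(\sigma_2)\bigr) = \int_0^{q_*}\xi''(t)\,\mathbbm{1}\{t<(\sigma_1,\sigma_2)\}\,\td t = \xi'\bigl((\sigma_1,\sigma_2)\bigr),
\end{equation*}
since $\xi'(0) = 0$ and $(\sigma_1,\sigma_2)\le q_*$ on $U$. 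By \eqref{def:MagField} this is exactly $\text{Cov}(Z(h_\alpha),Z(h_\beta))$, so the two centered Gaussian fields have identical finite-dimensional distributions; here I would also recall the basic well-posedness and regularity of these objects from \cite{JaAuSpin}[Section A.1].

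For the second assertion I would first recall the finite-RSB structure: with $\mathrm{supp}\,\zeta = \{0 = q_0 \le \cdots \le q_r = q_*\}$ and $m_k = \zeta([0,q_k])$, the RPC is a depth-$r$ tree, $Z(h_\alpha) = \sum_{\beta\in p(\alpha)}\eta_\beta$ with the $\eta_\beta$ independent $N(0,\xi'(q_{|\beta|}) - \xi'(q_{|\beta|-1}))$, and (Appendix \ref{appendix}) $Z'(h_\alpha) = \sum_{\beta\in p(\alpha)}z'_\beta$ where the conditional law of $z'_\beta$ given the ancestors is the $N(0,\xi'(q_{|\beta|}) - \xi'(q_{|\beta|-1}))$ law tilted by $\exp(m_{|\beta|-1}X_{|\beta|})$ and renormalized by $\exp(m_{|\beta|-1}X_{|\beta|-1})$, $X_k$ being the discrete Parisi iteration with terminal data inherited from $U(1,\cdot)$. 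On the SDE side, since $\zeta([0,t]) \equiv m_{k-1}$ on $[q_{k-1},q_k)$, equation \eqref{def:locfield} becomes $\td\mathcal{X}_t(\sigma) = m_{k-1}\xi''(t)u_x(t,\mathcal{X}_t(\sigma))\,\td t + \td\mathcal{Z}_t(\sigma)$ there, while the Parisi PDE on that interval is the Hopf--Cole equation whose solution obeys $\exp(m_{k-1}u(q_{k-1},x)) = \mathbb{E}[\exp(m_{k-1}u(q_k^-,x+\eta))]$ with $\eta \sim N(0,\xi'(q_k)-\xi'(q_{k-1}))$. A Girsanov change of measure on $[q_{k-1},q_k)$, identical to the computation in \cite{JaAuSpin}[Appendix A.1], then shows that the conditional law of the increment $\mathcal{X}_{q_k}(\sigma) - \mathcal{X}_{q_{k-1}}(\sigma)$ given $\mathcal{X}_{q_{k-1}}(\sigma)$ is exactly the $N(0,\xi'(q_k)-\xi'(q_{k-1}))$ law tilted by $\exp(m_{k-1}u(q_k^-,\cdot))$ and normalized by $\exp(m_{k-1}u(q_{k-1},\cdot))$. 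Identifying $X_k$ with $u(q_k^-,\cdot)$ (same recursion, same terminal condition), the two tree-indexed Markov processes have the same transition kernels on every branch, hence the same law; since both are driven underneath by the Gaussian field $\mathcal{Z}$ of the first part, the joint identification in distribution follows.

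The main obstacle is this Girsanov/Hopf--Cole step, namely checking that the exponential tilts written down in Appendix \ref{appendix} are precisely the Radon--Nikodym factors one obtains by integrating \eqref{def:locfield} across each interval $[q_{k-1},q_k)$ — in particular across the top interval $[q_{r-1},q_*]$, where the renormalized terminal condition $U(1,x) = x^2/(2(1+S_\zeta))$ must be shown to enter $X_r$ in the same way it enters $u(q_*,\cdot)$. Here I would lean on the well-posedness and regularity facts (Lipschitz $u_x$, a.s.\ joint continuity in $t$ and $\sigma$) recorded in \cite{JaAuSpin}[Section A.1]; the remaining steps — the additive decomposition of $Z$ along RPC paths and the level-by-level bookkeeping — are routine, and no limiting argument in $\zeta$ is needed since the lemma is stated only for finite RSB.
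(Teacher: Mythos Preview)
Your proposal is correct and follows essentially the same route as the paper: the first identity by matching Gaussian covariances (the paper phrases this as ``independent increments of the Brownian motion''), and the second by a Girsanov change of measure on $\mathcal{Z}$ together with the Cole--Hopf identification $u(q_k,\cdot)=X_k$ from the appendix. The only cosmetic difference is that the paper writes the Radon--Nikodym derivative globally as $\exp\bigl(\int_0^{q_*}\zeta([0,s])\,\td u(s,\mathcal{Z}_s)\bigr)$ and then factors it as $\prod_k e^{\zeta([0,q_{k-1}])(u(q_k,\mathcal{Z}_{q_k})-u(q_{k-1},\mathcal{Z}_{q_{k-1}}))}$, whereas you set up the tilts level by level from the start; the content is the same.
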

    \begin{proof}
    The independent increments property of the Brownian motion will give us the first equality. We will now devote ourselves to proving the second equality. For simplicity of presentation, we will show the equivalence for a fixed $h_\alpha$. We will use Girsanov's theorem \cite{StVar} to determine the law of  $\mathcal{X}_t$ at $h_\alpha$. Let $Q$ be a measure space in which we have defined the Brownian motion \eqref{eq:BrownMot} . Consider a measure $P$ with Radon-Nikodym derivative
    \begin{equation}
        \frac{\td P}{\td Q} = e^{\int_0^t \zeta([0,s]) \td u(s,\mathcal{Z}_s) }
    \end{equation}
    Then under the measure $P$ the law of $\mathcal{Z}_t$ is distributed according to the law of $\mathcal{X}_t$. Considering a finite set of times $q_0,\ldots,q_r$ we have
    \begin{equation}
    \begin{aligned}
        \mathbb{E}_Q[F(\mathcal{X}_{q_0},\ldots,\mathcal{X}_{q_r})] &= \mathbb{E}_P[F(\mathcal{Z}_{q_0},\ldots, \mathcal{Z}_{q_r})] \\
        &= \int F(\mathcal{Z}_{q_0},\ldots,\mathcal{Z}_{q_r}) e^{ \int_0^t \zeta([0,s]) \td u(s,\mathcal{Z}_s)} \td Q(\mathcal{Z})\\
        &= \int F(\mathcal{Z}_{q_0},\ldots,\mathcal{Z}_{q_r}) \prod_{i=1}^r e^{\zeta([0,q_k])(u(q_k,\mathcal{Z}_{q_k}) - u(q_{k-1},\mathcal{Z}_{q_{k-1}}))} \td Q
    \end{aligned}
    \end{equation}
    Now, we use the fact that the Cole-Hopf solution  to the Parisi initial value problem is given by  is given by $u(q_k,\mathcal{Z}_{q_k}) = X_k(\mathcal{Z}_{q_1} - \mathcal{Z}_{q_0},\ldots,\mathcal{Z}_{q_k} - \mathcal{Z}_{q_{k-1}}) $ \cite{CheAuf1} and we have finished the proof.
    \end{proof}

    Now we cite a weak continuity result from \cite{JaAuSpin} regarding the behavior of the right hand side of \eqref{eq:pderep}

    Let $\mathcal{Q}_d$ denote the set of $d \times d$ ultrametric matrices of the form
    \begin{equation}
        \{(q_{ij})_{1\le i,j \le d}:q_{ij}=q_{ji}, q_{ij} \ge q_{ik} \wedge a_{kj} \forall i,j,k   \}
    \end{equation}
    Consider the space $\text{Pr}([0,1])$ equipped with the weak$*$ topology so the product space $Pr([0,1]) \times \mathcal{Q}_d$ is compact Polish. For any $Q \in \mathcal{Q}_d$, let $\{\sigma^i(Q): 1\le i \le d \} \in \mathcal{H}$, some Hilbert space, whose overlap matrix is $Q$. We can define the functional
    \begin{equation}
        \mathcal{R} (\zeta,Q) = \mathbb{E}[\prod_{i=1}^d u_x(q_*, \mathcal{X}_{q_*}(\sigma^i(Q)))]
    \end{equation}
    Then we have, this is similar to Lemma 3.3 in \cite{JaAuSpin}
    \begin{lm} \label{lm:weakcont}
    $\mathcal{R}$ is well-defined and if we let $\zeta_r$ be a collection of finite $RSB$ measures approaching $\zeta$ in the weak$*$ topology and $Q_r$ be a sequence of $d$x$d$ matrices approaching $Q$. Then we have that $\lim_{r \rightarrow \infty} \mathcal{R}(\zeta_r,Q_r) = \mathcal{R}(\zeta,Q)$
    \end{lm}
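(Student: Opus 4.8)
The argument will follow closely that of Lemma 3.3 in \cite{JaAuSpin}; the spherical features of the model play no role at this stage, the relevant variance renormalizations having already been carried out in the previous sections. The plan has three steps: first, record uniform regularity of the Parisi field $u_x$ and of the local field process $\mathcal{X}$; second, prove joint weak convergence of the finite family $\big(\mathcal{X}_{q_*}(\sigma^i(Q_r))\big)_{i=1}^d$ as $(\zeta_r,Q_r)\to(\zeta,Q)$; and third, upgrade this to convergence of the expectation defining $\mathcal{R}$ via a uniform integrability bound, which will also give well-definedness.

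For the regularity I would invoke the Cole--Hopf representation of the Parisi initial value problem together with the standard a priori estimates for its solution (see Appendix A.1 of \cite{JaAuSpin} and \cite{CheAuf1}): $u$ is smooth on $(0,1)\times\mathbb{R}$ with $u_{xx}\ge 0$, and $x\mapsto u_x(t,x)$ is globally Lipschitz with a constant bounded uniformly over $t\in[0,q_*]$ and over $\zeta\in\mathrm{Pr}([0,1])$, the uniformity in $\zeta$ coming from $1\le 1+S_\zeta\le 1+\xi'(1)$. The same references give continuity of $\zeta\mapsto u$ and $\zeta\mapsto u_x$, locally uniformly in $(t,x)$, for the weak$*$ topology; the one delicate point, that $\zeta_r([0,t])$ need not converge to $\zeta([0,t])$ at atoms of $\zeta$, is harmless because $\zeta([0,t])$ enters the drift of \eqref{def:locfield} only under the time integral, so dominated convergence along the co-countable set of continuity points suffices. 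Turning to the Gaussian data, the covariance $(t_1\wedge t_2)\wedge q_{ij}$ of the family $\big(B_\cdot(\sigma^i(Q))\big)_{i=1}^d$ from \eqref{eq:BrownMot} is Lipschitz in the entries of $Q$, so these laws converge weakly whenever $Q_r\to Q$ in $\mathcal{Q}_d$. Feeding the converging Gaussian inputs and the converging drift coefficient into the coupled system \eqref{def:cavfield}--\eqref{def:locfield} and running a Gr\"onwall estimate on the pathwise difference of the two solutions gives $\big(\mathcal{X}_{q_*}(\sigma^i(Q_r))\big)_{i=1}^d\Rightarrow\big(\mathcal{X}_{q_*}(\sigma^i(Q))\big)_{i=1}^d$, and composing with the continuous maps $u_x(q_*,\cdot)$ yields weak convergence of the products.

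For the final step I would show $\sup_{\zeta,Q}\mathbb{E}\big[|\mathcal{X}_{q_*}(\sigma^i(Q))|^{2p}\big]<\infty$ for every $p\ge 1$: the martingale part of \eqref{def:locfield} has quadratic variation $\int_0^{q_*}\xi''(t)\,\td t\le\xi'(1)$, and the drift is linear in $\mathcal{X}_t$ with coefficient bounded uniformly in $\zeta$, so Gr\"onwall applied to $t\mapsto\mathbb{E}|\mathcal{X}_t(\sigma^i)|^{2p}$ (or a direct Gaussian comparison) gives the bound. Since $u_x(q_*,\cdot)$ has at most linear growth with a uniform constant, H\"older's inequality makes $\big\{\prod_{i=1}^d u_x(q_*,\mathcal{X}_{q_*}(\sigma^i(Q)))\big\}_{\zeta,Q}$ bounded in $L^2$ and hence uniformly integrable, which together with the weak convergence of the second step yields $\mathcal{R}(\zeta_r,Q_r)\to\mathcal{R}(\zeta,Q)$; the $L^2$ bound at a single $(\zeta,Q)$ also makes $\mathcal{R}$ well-defined. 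I expect the main obstacle to be the joint weak-continuity step: one must control the fact that the ultrametric tree encoded by $Q_r$ can change combinatorial type in the limit (two overlaps merging, or one reaching $q_*$) and that $u_x$ stays continuous in $\zeta$ across atoms of the limit measure. Both are handled exactly as in \cite{JaAuSpin}: the first because the Gaussian law depends continuously on the matrix entries irrespective of the combinatorial type, the second by the dominated-convergence remark above.
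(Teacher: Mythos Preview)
Your proposal is correct and lands on the same conclusion as the paper, but the packaging differs in a way worth noting. The paper does not run a direct Gr\"onwall-plus-uniform-integrability argument. Instead it introduces two explicit truncations: a cutoff $B$ on the values of $u_x$ (justified uniformly in $\zeta$ by the second-moment bound of Lemma~\ref{lm:2bound}, using that $\mathcal{X}^{\zeta}_{q_*}$ has the law of $Z'$) and a spatial cutoff $C$ on the drift (justified uniformly via the submartingale property of $|\mathcal{X}^{\zeta}_t|$ from Lemma~\ref{lem:submar} together with Doob's maximal inequality). With both truncations in place, the modified drift coefficients converge uniformly on all of $[0,1]\times\mathbb{R}$ by Lemma~\ref{lem:unifconv}, and the paper then invokes the Stroock--Varadhan convergence theorem (Theorem~11.1.4 of \cite{StVar}) for the modified diffusions before removing the cutoffs in the right order.

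Your route replaces the appendix lemmas and the Stroock--Varadhan machinery by the observation that in this spherical setting $u$ is quadratic, so $u_x$ is globally Lipschitz with a constant bounded by $1$ uniformly in $\zeta$; this lets you get moment bounds and pathwise stability directly from the SDE. That is cleaner here, though it leans on the explicit form of the solution. The paper's truncation-and-martingale-problem approach is heavier but would survive in situations where one only has locally uniform control on $u_x$. One small point to make explicit in your write-up: when you ``compose with the continuous maps $u_x(q_*,\cdot)$'' you are composing with $u_x^{\zeta_r}(q_*,\cdot)$, not $u_x^{\zeta}(q_*,\cdot)$, so you need the locally uniform convergence $u_x^{\zeta_r}\to u_x^{\zeta}$ simultaneously with the weak convergence of $\mathcal{X}^{\zeta_r}_{q_*}$; you have both ingredients, but the sentence as written elides this.
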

    \begin{proof}
    In the following discussion, we will use the notation in Lemma 3.3 in \cite{JaAuSpin}, since the law of the diffusion of the Local field process $\mathcal{X}$ only depends on the points $\sigma^i(Q)$ through the overlap distribution $Q$, the functional $\mathcal{R}(\zeta,Q)$ is well-defined.

    In order to show the continuity, it would suffice to show that the diffusion corresponding to the local field process $\mathcal{X}^{\zeta_r,Q_r}$ will approach the diffusion corresponding to the limit local field process $\mathcal{X}^{\zeta,Q}$. If we let $a^r_{ij}(t) = \mathbbm{1}(t \le q_{ij}^r)$, $b^r_{ij}(t,.) = \xi''\zeta_r u_x^r(t,.)$ be the coefficients corresponding to the diffusion $\mathcal{X}^{\zeta_r,Q_r}$, ideally one would like to show that we have uniformly that $a^r_{ij} \rightarrow a_{ij}$ and $b^r_{ij}(t,.) \rightarrow b_{ij}(t,.)$.

    However, we cannot show this uniform approach on the entire region $[0,1] \times \mathbb{R}$, rather we will only be able to show it for compact intervals. Our first reduction would be to show that it would only suffice to control the uniform approach of the diffusion coefficients on a compact interval. In order to make this rigorous, we will show the following steps.

    First, we take a cutoff and use that uniformly in the approximating sequence $\zeta_i$, we have that
    \begin{equation}
        |\mathbb{E}[\prod_{i=1}^d u^r_x(q_*, \mathcal{X}^{\zeta_r}_{q_*}(\sigma^i(Q_r)))] - \mathbb{E}[\prod_{i=1}^d \max(B,u^r_x(q_*, \mathcal{X}^{\zeta_r}_{q_*}(\sigma^i(Q_r))))]| \rightarrow 0
    \end{equation}
    as $B \rightarrow \infty$.
    This is a consequence of the moment bounds established in Lemma \ref{lm:2bound} and recalling that the process $\mathcal{X}^{\zeta}_{q_*}$ has the same distribution as $Z'$.

    Secondly, we use that  uniformly in $\zeta_i$, the local field process $\mathbb{P}(\exists(t,i):|\mathcal{X}^{\zeta_r}_t(\sigma^i(Q_r)| \ge C) \rightarrow 0$ as $C \rightarrow \infty$. This is a consequence of Lemma \ref{lem:submar} which shows that $|\mathcal{X}^{\zeta}_t|$ is a submartingale in $t$ (being equal to the process $|Z^{'}_k|$ for a measure satisfying finite RSB) and applying Doob's maximal inequality.

    Combining these two facts will allow us to see that if we instead consider the modified local field process $\tilde{\mathcal{X}}^{\zeta_r, Q_r}$ diffusion $\tilde{a}^r_{ij} := a^r_{ij}$ and $\tilde{b}^r_{ij}(t,.) := b^{r}_ij(t,.) \mathbbm{1}(x \le C)$ then we have that uniformly in the $\zeta_r$
    \begin{equation}
        \lim_{B\rightarrow \infty} \lim_{C \rightarrow \infty} \mathbb{E}[\prod_{i=1}^d \max(B,u^r_x(q_*, \tilde{\mathcal{X}}^{\zeta_r}_{q_*}(\sigma^i(Q_r))))] \rightarrow \mathbb{E}[\prod_{i=1}^d u^r_x(q_*, \mathcal{X}^{\zeta_r}_{q_*}(\sigma^i(Q_r)))]
    \end{equation}

    The benefit of this representation is that we have uniform convergence for the modified diffusion coefficients $\tilde{b}$ and $\tilde{a}$ by Lemma \ref{lem:unifconv}. Therefore, we can apply the Stroock-Varadhan, Theorem  11.1.4 in \cite{StVar} to get the convergence
    \begin{equation}
        \lim_{r \rightarrow \infty }\mathbb{E}[\prod_{i=1}^d \max(B,u^r_x(q_*, \tilde{\mathcal{X}}^{\zeta_r}_{q_*}(\sigma^i(Q_r))))] = \mathbb{E}[\prod_{i=1}^d \max(B,u^r_x(q_*, \tilde{\mathcal{X}}^{\zeta}_{q_*}(\sigma^i(Q)))]
    \end{equation}

    Since we have established earlier that the limits in $B$ and $C$ can all be taken uniformly in $\zeta_i$, we can exchange the $r \rightarrow \infty$ limit with the $\lim_{C\rightarrow \infty}$ and $\lim_{B \rightarrow \infty}$ limit to finally derive the result
    $\lim_{r \rightarrow \infty} \mathcal{R}(\zeta_r,Q_r) = \mathcal{R}(\zeta,Q)$.



    \end{proof}

    Now we finish the proof of our main result.
    \begin{proof}[Proof of \ref{thm:mainpde} ]
    We have already shown the result if we assume that $\zeta$ satisfies finite RSB. Using \ref{lm:weakcont} for our PDE functional and the fact that the spin distribution  $\lim_{N\rightarrow \infty}\mathbb{E}[\prod_{l =1}^k\langle \prod_{i =1}^{n} (\sigma_i^l)^{e_{i,l}}\rangle_{H_N}]$ is a weakly continuous functional of the overlap distribution gives us our desired conclusion by taking limits.
    \end{proof}

\appendix
\section{Computations for Finite RSB} \label{appendix}
\subsection{Construction of the Tilted Random Variable}

This section will contain various estimates that we use in order to understand the random variable $(Z')^{\zeta}$ and local field $\mathcal{X}^{\zeta}$ for measures $\zeta$ that satisfy finite replica symmetry breaking.

Let us first discuss the derivation of the random variables $Z'$. These random variables first appeared in the proof of Lemma \ref{lm:Cremoval}. Recall we wanted to understand the expression
\begin{equation} \label{eq:expectz}
    \mathbb{E}\left[\frac{\sum_{\alpha} w_{\alpha} Z(h_\alpha) e^{\frac{Z^2(h_\alpha)}{2[1+S_{\zeta}]}}}{\sum_{\alpha} w_{\alpha}  e^{\frac{Z^2(h_\alpha)}{2[1+S_{\zeta}]}}}\right]
\end{equation}
Recall that the variables $Z(h_\alpha)$ have a branching structure with $Z(h_\alpha) = \sum_{\beta \in p(\alpha)} z_{\beta}$ where $p(\alpha)$ designates the path between $\alpha$ and the root $\beta$. Each $z_\beta$ is an independent Gaussian random variable with variance $\xi'(q_{|\beta|}) - \xi'(q_{|\beta|-1})$.

We would now like to understand the expression \eqref{eq:expectz} by applying a modified Bolthausen-Sznitman invariance principle to the RPC variables $(w_\alpha e^{\frac{Z^2(h_\alpha)}{2[1+S_{\zeta}]}},Z(h_\alpha))$.

We would be able to show that there exists a variable $Z'(h_\alpha)$ with the following equality in distribution $(w_\alpha e^{\frac{Z^2(h_\alpha)}{2[1+S_{\zeta}]}},Z(h_\alpha)) =_d (w_\alpha, Z'(h_\alpha))$.

To define $Z'(h_\alpha)$ we would require the following functions.

We let
\begin{equation}
X_r(x_1,\ldots, x_r) = -\frac{(\sum_{i=1}^r x_i)^2}{2[1+S_\zeta]}
\end{equation}
and define the iteration
\begin{equation}
X_p(x_1,\ldots,x_p) = \frac{1}{\zeta([0,q_p])} \log \mathbb{E}_{p+1}[e^{\zeta([0,q_p]) X_{p+1}(x_1,\ldots,x_p,x_{p+1})}] \forall 1\le p \le r-1
\end{equation}
where $\mathbb{E}_{p+1}$ is integration with respect to the randomness of $x_{p+1}$ which is a  mean zero Gaussian random variable. In the notation of \cite{Panchenkobook}[Ch 4], this can alternatively be seen as integration with respect to the transition kernel $G_{p+1}(x_{p+1})= e^{-\frac{x_{p+1}^2}{2(\xi'(q_{p+1}) -\xi'(q_p) )}}$

We define the modified transition kernel
$$
G'_{p+1}(x',x'_{p+1}):= G_{p+1}(x'_{p+1}) e^{\zeta([0,q_p])(X_{p+1}(x',x'_{p+1}) -X_{p}(x'))}
$$

With this in hand, we are now able to describe the distribution of the new variables $Z'(h_\alpha)$: $Z'(h_\alpha)$ will decompose as a sum $\sum_{\beta \in p(\alpha)} z_\beta'$, where we generate the terms $z_\beta'$ via an iterative process.

Consider some node $\beta$ of the RPC and let $\beta_1,\ldots, \beta_{|\beta|-1}$ be the path from the root to $\beta$ where $|\beta|$ is the depth of the node $\beta$. We generate $z'_\beta$ using the values of the $z'_{\beta_i}$  and kernel $G'_{p+1}$ as in the following equation.
    \begin{equation}
        \mathbb{P}(z'_\beta \in A)  =\int_A G'_{|\beta|}(z'_{\beta_1},\ldots,z'_{\beta_{|\beta|-1}},z'_\beta) \td z'_\beta
    \end{equation}

    Using the equality in distribution $(w_\alpha e^{\frac{Z^2(h_\alpha)}{2[1+S_{\zeta}]}},Z(h_\alpha)) =_d (w_\alpha, Z'(h_\alpha))$, we can derive that
    \begin{equation} \label{eq:pderep}
        \mathbb{E}\left[ \frac{\sum_{\alpha} w_{\alpha} Z(h_\alpha) e^{\frac{(Z(h_\alpha))^2}{2[1+S_\zeta]}}}{\sum_{\alpha} w_{\alpha}   e^{\frac{(Z(h_\alpha))^2}{2[1+S_\zeta]}}}\right] = \mathbb{E}\left[\sum_{\alpha}w_{\alpha}  \frac{Z'(h_\alpha)}{2[1+S_\zeta]}\right]
    \end{equation}

    In this appendix, we compute various properties of the random variable $Z'(h_\alpha)$. In the computations that proceed, we will not be concerned regarding the various correlations of the $Z'(h_\alpha)$ relating to the position $h_\alpha$ along the RPC and, instead, only consider the distribution of a single $Z'$ with the same distribution as a single $Z'(h_\alpha)$. $Z'$ will decompose as a sum $z'_1+\ldots+z'_r$ where the $z'_k$ are distributed according to the Kernels $G'_k$.

    Our first lemma will involve an explicit computation of the functions $X_p$ for all p

    \begin{lm}\label{lm:recursion}
    \begin{equation}
        X_p(x_1,\ldots,x_p) = C_p + \frac{(\sum_{i=1}^p x_i)^2}{2[1+S_p]}
    \end{equation}
    with some constants $C_p$ and $S_p$ with the constant $S_p$ will satisfy the relationship $S_p = S_{p+1} - \zeta([0,q_p])(\xi'(q_{p+1}) - \xi'(q_p))$
    \end{lm}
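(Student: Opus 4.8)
The plan is to prove the identity by downward induction on $p$, running from $p=r$ down to $p=1$ and reproducing the quadratic shape of $X_p$ through the single Gaussian integral that defines the recursion. The base case is immediate: by construction $X_r(x_1,\ldots,x_r) = -\tfrac{(\sum_{i=1}^r x_i)^2}{2[1+S_\zeta]}$ is already of the asserted form, once we set $C_r = 0$ and define $S_r$ by $1+S_r := -(1+S_\zeta)$. Since $S_\zeta \ge 0$ this gives $1+S_r<0$, and I will maintain the invariant $1+S_p<0$ throughout the induction; this negativity is exactly what will guarantee the convergence of the Gaussian integrals appearing below.

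For the inductive step, suppose $X_{p+1}(x_1,\ldots,x_{p+1}) = C_{p+1} + \tfrac{(\sum_{i=1}^{p+1} x_i)^2}{2[1+S_{p+1}]}$ with $1+S_{p+1}<0$. Writing $a = \sum_{i=1}^p x_i$, $m = \zeta([0,q_p])$, $v = \xi'(q_{p+1}) - \xi'(q_p) > 0$, and $\lambda = \tfrac{1}{2[1+S_{p+1}]} < 0$, the defining iteration for $X_p$ becomes
\begin{equation}
X_p(x_1,\ldots,x_p) = C_{p+1} + \frac{1}{m}\log\Big(\frac{1}{\sqrt{2\pi v}}\int_{\mathbb{R}} e^{m\lambda(a+x)^2 - x^2/(2v)}\,\td x\Big).
\end{equation}
Because $m\lambda \le 0 < \tfrac{1}{2v}$, the $x$–quadratic in the exponent is strictly negative definite, so completing the square is legitimate; with $B := \tfrac{1}{2v} - m\lambda > 0$ the integral equals $\tfrac{1}{\sqrt{2vB}}\,\exp\!\big[(m\lambda + (m\lambda)^2/B)\,a^2\big]$. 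Taking $\tfrac1m\log$ of this shows that $X_p$ is again of the form $C_p + \tfrac{a^2}{2[1+S_p]}$, where $C_p = C_{p+1} - \tfrac{1}{2m}\log(2vB)$ and the new quadratic coefficient is $\tfrac{1}{2[1+S_p]} = \lambda + \tfrac{m\lambda^2}{B} = \tfrac{\lambda}{1-2vm\lambda}$.

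It then remains only to read off the recursion for $S_p$. Inverting $\tfrac{1}{2[1+S_p]} = \tfrac{\lambda}{1-2vm\lambda}$ gives $2[1+S_p] = 2[1+S_{p+1}] - 2vm$, i.e. $S_p = S_{p+1} - vm = S_{p+1} - \zeta([0,q_p])(\xi'(q_{p+1}) - \xi'(q_p))$, which is the claimed relation; moreover $\tfrac{1}{2[1+S_p]} = \tfrac{\lambda}{1-2vm\lambda}$ has the same sign as $\lambda$, so $1+S_p < 1+S_{p+1} < 0$ and the invariant is preserved, closing the induction. I do not expect a genuine obstacle here: the computation is a routine Gaussian integral at each level, and the only point requiring care — that the integral converges at every stage — is handled uniformly by the invariant $1+S_p<0$, which persists precisely because each step subtracts the strictly positive quantity $\zeta([0,q_p])(\xi'(q_{p+1}) - \xi'(q_p))$ from $1+S_{p+1}$.
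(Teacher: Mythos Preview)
Your argument is correct and follows essentially the same route as the paper: downward induction on $p$, a single Gaussian integration at each step via completing the square, and then reading off the recursion $1+S_p = (1+S_{p+1}) - \zeta([0,q_p])(\xi'(q_{p+1})-\xi'(q_p))$.

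The one point worth flagging is the sign convention. You took the minus sign in the appendix's definition $X_r = -\tfrac{(\sum x_i)^2}{2[1+S_\zeta]}$ at face value and set $1+S_r = -(1+S_\zeta)<0$, which makes convergence of each Gaussian integral automatic. The paper's own proof instead treats $1+S_p$ as \emph{positive} (the minus sign in the definition of $X_r$ appears to be a typo, consistent with the tilt $e^{Z^2/2[1+S_\zeta]}$ used earlier), and therefore has to verify convergence separately by checking that the smallest value $1+S_0$ is still positive. Your convention is internally consistent and proves the lemma as literally stated, but be aware that the subsequent lemmas in the appendix (the submartingale property and the moment bounds) rely on $1+S_k>0$ --- for instance $\mathbb{E}[z'_{k+1}\mid Z'_k] = Z'_k\,\zeta([0,q_k])\,\tfrac{\xi'(q_{k+1})-\xi'(q_k)}{1+S_k}$ must be nonnegative --- so the paper's sign is the one you will want downstream.
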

    \begin{proof}
    We will prove this by induction with base case $p =r$ and going down.

We first assume the induction hypothesis for $X_{p+1}$ we have letting $M_p = \sum_{i=1}^p x_i$ and recalling the constant $1+ S_p = [1+S_{p+1}] - \zeta([0,q_p])(\xi'(q_{p+1}) - \xi'(q_p))$
\begin{equation}
    \begin{aligned}
        &X_p(x_1,\ldots,x_p) -C_p = \frac{1}{\zeta[0,q_p]} \log \int e^{\frac{\zeta[0,q_p]}{2[1+S_{p+1}]}(M_p^2 + 2 x_{p+1}M_p +x_{p+1}^2) } e^{-\frac{x_{p+1}^2}{2(\xi'(q_{p+1}) - \xi'(q_p))}} \td x_{p+1}\\
        &= \frac{1}{\zeta[0,q_p]} \log \int e^{-\frac{1+S_{p}}{2[1+S_{p+1}](\xi'(q_{p+1}) - \xi'(q_p)}(x_{p+1} - M_p \frac{\zeta([0,q_p])(\xi'(q_{p+1}) - \xi'(q_p))}{1+S_p})^2 + M_p^2[\frac{\zeta([0,q_p])^2 (\xi'(q_{p+1}) - \xi'(q_p))}{2(1+S_{p+1})(1+S_p)} + \frac{\zeta([0,q_p])}{2[1+S_{p+1}]}]} \td x_{p+1}
    \end{aligned}
\end{equation}
We can integrate the Gaussian above provided we have $1+S_p \ge 0$. We can also evaluate the constant as
\begin{equation}
    \frac{\zeta([0,q_p])^2 (\xi'(q_{p+1}) - \xi'(q_p))}{2(1+S_{p+1})(1+S_p)} + \frac{\zeta([0,q_p])}{2[1+S_{p+1}]} = \frac{\zeta([0,q_p])}{2 (1+S_p)}
\end{equation}
Using this we see that
\begin{equation}
    X_p(x_1,\ldots,x_p) = C_p + \frac{M_p^2}{2[1+S_p]}
\end{equation}

Notice that $S_p$ is increasing in $p$(namely, higher the $p$ the higher the value of $S_p$). The induction will be finished provided we have that $1+ S_0$ is positive.

We can express $1+S_0 = 1 + \sum_{i=0}^r \zeta(\{q_p\})(1-q_p)\xi'(q_p)$. This is manifestly positive.

    \end{proof}

    \begin{lm} \label{lem:submar}
        The process $|Z'_k| = |\sum_{j=1}^k z'_j|$  is a submartingale.
    \end{lm}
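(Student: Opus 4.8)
The plan is to read off the conditional law of $z'_{k+1}$ given the past directly from the kernels $G'_{p+1}$ and Lemma \ref{lm:recursion}, and then invoke the elementary inequality $\mathbb{E}|N(\mu,\sigma^2)|\ge|\mu|$ together with the sign of the induced drift. Write $\mathcal F_k=\sigma(z'_1,\dots,z'_k)$ and $Z'_k=\sum_{j=1}^k z'_j$. Each $z'_j$ is a centered Gaussian reweighted by the exponential of a (strictly concave) quadratic, hence again Gaussian, so $Z'_k\in L^1$ and it suffices to prove $\mathbb E[\,|Z'_{k+1}|\mid\mathcal F_k\,]\ge|Z'_k|$.

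By construction the conditional density of $z'_{k+1}$ given $\mathcal F_k$ is proportional to $G'_{k+1}(z'_1,\dots,z'_k,\cdot)$, that is, after discarding the $x$-independent factor $e^{-\zeta([0,q_k])X_k(z'_1,\dots,z'_k)}$, to
\begin{equation}
x\ \longmapsto\ \exp\!\Big(-\tfrac{x^2}{2\Delta_{k+1}}+\zeta([0,q_k])\,X_{k+1}(Z'_k+x)\Big),\qquad \Delta_{k+1}:=\xi'(q_{k+1})-\xi'(q_k).
\end{equation}
Substituting $X_{k+1}(Z'_k+x)=C_{k+1}+(Z'_k+x)^2/(2[1+S_{k+1}])$ from Lemma \ref{lm:recursion} and keeping only the $x$-dependent part, this density is proportional to $\exp\!\big(-\tfrac12\lambda_k x^2+\tfrac{\zeta([0,q_k])}{1+S_{k+1}}Z'_k\,x\big)$ with
\begin{equation}
\lambda_k=\frac{1}{\Delta_{k+1}}-\frac{\zeta([0,q_k])}{1+S_{k+1}}=\frac{1+S_k}{(1+S_{k+1})\,\Delta_{k+1}}>0 ,
\end{equation}
where the middle identity is exactly the recursion $1+S_k=1+S_{k+1}-\zeta([0,q_k])\Delta_{k+1}$ of Lemma \ref{lm:recursion} and positivity follows from $1+S_k>0$, established there. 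Hence, conditionally on $\mathcal F_k$,
\begin{equation}
Z'_{k+1}=Z'_k+z'_{k+1}\ \sim\ N\!\big(c_k Z'_k,\ \lambda_k^{-1}\big),\qquad c_k:=1+\frac{\zeta([0,q_k])\,\Delta_{k+1}}{1+S_k}\ \ge\ 1,
\end{equation}
the bound $c_k\ge1$ holding because $\zeta$ is a probability measure, $\xi'$ is nondecreasing, and $1+S_k>0$.

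To conclude I would apply conditional Jensen: for a Gaussian $N(\mu,\sigma^2)$ one has $\mathbb E\,|N(\mu,\sigma^2)|\ge|\mathbb E\,N(\mu,\sigma^2)|=|\mu|$, whence
\begin{equation}
\mathbb E[\,|Z'_{k+1}|\mid\mathcal F_k\,]\ \ge\ |c_k Z'_k|\ =\ c_k\,|Z'_k|\ \ge\ |Z'_k| ,
\end{equation}
which is the submartingale property (and shows $|Z'_k|$ is genuinely a submartingale, not a martingale, at each level with $\zeta([0,q_k])>0$).

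The proof has essentially no obstacle beyond bookkeeping: the only point that needs care is that the exponential tilt $e^{\zeta([0,q_k])X_{k+1}}$ keeps the transition kernel a genuine nondegenerate Gaussian — equivalently $\lambda_k>0$, i.e. $1+S_k>0$ — and simultaneously produces a drift coefficient $c_k\ge1$ pointing away from the origin; both are immediate from Lemma \ref{lm:recursion}. Equivalently one could observe that $\mathbb E[Z'_{k+1}\mid\mathcal F_k]=c_k Z'_k$ with $c_k\ge1$ and conclude by convexity of $x\mapsto|x|$, but the explicit kernel computation above is the most direct route.
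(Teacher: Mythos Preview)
Your proof is correct and follows essentially the same approach as the paper: both identify, via Lemma~\ref{lm:recursion}, that the conditional law of $z'_{k+1}$ given the past depends only on $Z'_k$ and has mean $\frac{\zeta([0,q_k])(\xi'(q_{k+1})-\xi'(q_k))}{1+S_k}\,Z'_k$, then combine this with convexity of $|\cdot|$. The only cosmetic difference is that you complete the square to write out the full conditional Gaussian law and apply Jensen directly as $\mathbb{E}[|Z'_{k+1}|\mid\mathcal F_k]\ge|\mathbb{E}[Z'_{k+1}\mid\mathcal F_k]|=c_k|Z'_k|$, whereas the paper first reduces WLOG to $Z'_k>0$ by symmetry of the kernel and then bounds $|Z'_{k+1}|\ge Z'_{k+1}$ before computing the conditional mean; your route is slightly cleaner but the content is identical.
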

    \begin{proof}
    We first remark that the transition Kernel $G'_{k+1}$ will be a function that depends only on the values $z_1',z_2',\ldots z_k'$ through the sum $Z_k'$. This is due to \ref{lm:recursion} that shows that the function $X_k(x_1,\ldots,x_k)$ and $X_k(x_1,\ldots,x_{k+1})$ are functions only of the sum $(x_1+\ldots +x_k)^2$ and $(x_1+ \ldots+x_{k+1})^2$ respectively. We now have to perform the explicit computation which will allow us to understand the value of $z'_{k+1}$.

    We first remark that trivially when $Z_k$ is positive,  we have that
    $$
    \mathbb{E}[|Z'_{k+1}| | |Z_k|] \ge \mathbb{E}[|Z'_{k}| +z'_{k+1}| |Z'_k|]
    $$
    Notice that since the Kernel does not depend on the sign of $Z'_k$, we may as well assume that $Z'_k$ is positive when computing expectations.

    We will have proved $|Z'_k|$ is a submartingale when we establish
    $$
    \mathbb{E}[Z'_k + z'_{k+1}|Z'_k] > Z'_k
    $$
    when $Z'_k$ is positive.

    Let us now compute the necessary integral
    \begin{equation}\label{eq:1stmomentZ'}
        \mathbb{E}[z'_{k+1}|Z'_k] = \frac{\int z e^{\zeta([0,q_k]) \frac{(Z'_k +z)^2}{2[1+S_{k+1}]}} e^{-\frac{z^2}{2(\xi'(q_{k+1}) - \xi'(q_k))}} \td z}{\int  e^{\zeta([0,q_k]) \frac{(Z'_k +z)^2}{2[1+S_{k+1}]}} e^{-\frac{z^2}{2(\xi'(q_{k+1}) - \xi'(q_k))}} \td z} = Z_k' \zeta([0,q_k]) \frac{\xi'(q_{k+1}) - \xi'(q_k)}{1+S_k}
    \end{equation}
    Clearly, this will be positive as we have assumed $Z_k'$ to be positive and we have proved that the sequence $Z_j'$ is a submartingale.
    \end{proof}

    In the following lemmas, we want to study the behavior of the random variable $Z'$ as a function of the approximating measure $\zeta$. We will use the notation $Z'(\zeta)$ to specifically denote the random variable $Z'$ when it is derived using the base measure $\zeta$
    \begin{lm} \label{lm:2bound}
    $Z'(\zeta)$ is bounded uniformly for all $\zeta$.
    \end{lm}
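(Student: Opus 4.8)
The plan is to bound the second moment of $Z'(\zeta)$ by following the branching construction of $Z'$ level by level. Write $Z'_k = z'_1+\cdots+z'_k$ for the partial sum along a root-to-leaf path and set $a_k := \mathbb{E}[(Z'_k)^2]$, so $a_0=0$ and the quantity to bound is $a_r=\mathbb{E}[(Z'(\zeta))^2]$ (this is the form in which the lemma is used in Lemma~\ref{lm:Cremoval} and Lemma~\ref{lm:weakcont}). The key input is Lemma~\ref{lm:recursion}: since $X_p(x_1,\dots,x_p)=C_p+\frac{(x_1+\cdots+x_p)^2}{2[1+S_p]}$, the kernel $G'_{p+1}(z'_1,\dots,z'_p,\cdot)$ depends on the past only through $Z'_p$ and, as a function of its last argument, is the Gaussian density proportional to $\exp\left(-\frac{z^{2}}{2(\xi'(q_{p+1})-\xi'(q_p))}\right)\exp\left(\frac{\zeta([0,q_p])(Z'_p+z)^{2}}{2[1+S_{p+1}]}\right)$. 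Completing the square exactly as in the computation giving \eqref{eq:1stmomentZ'}, conditionally on $Z'_p$ the increment $z'_{p+1}$ is Gaussian with mean $\mu_p=Z'_p\,\zeta([0,q_p])\frac{\xi'(q_{p+1})-\xi'(q_p)}{1+S_p}$ and variance $v_p=\frac{(1+S_{p+1})(\xi'(q_{p+1})-\xi'(q_p))}{1+S_p}$; both are well defined since $1+S_p=(1+S_{p+1})-\zeta([0,q_p])(\xi'(q_{p+1})-\xi'(q_p))>0$ by the positivity of $1+S_0$ and the monotonicity of $p\mapsto S_p$ established in Lemma~\ref{lm:recursion}.

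Using $Z'_{p+1}=Z'_p+z'_{p+1}$ together with $1+S_{p+1}=(1+S_p)+\zeta([0,q_p])(\xi'(q_{p+1})-\xi'(q_p))$, which gives $Z'_p+\mu_p=Z'_p\frac{1+S_{p+1}}{1+S_p}$, taking conditional expectations yields $\mathbb{E}[(Z'_{p+1})^2\mid Z'_p]=(Z'_p)^2\left(\frac{1+S_{p+1}}{1+S_p}\right)^2+v_p$, hence the closed recursion
\begin{equation}
a_{p+1} = a_p\left(\frac{1+S_{p+1}}{1+S_p}\right)^{2} + \frac{(1+S_{p+1})(\xi'(q_{p+1})-\xi'(q_p))}{1+S_p},\qquad a_0=0 .
\end{equation}
Dividing by $(1+S_{p+1})^2$ makes this telescope:
\begin{equation}
\frac{a_r}{(1+S_r)^{2}} = \sum_{p=0}^{r-1}\frac{\xi'(q_{p+1})-\xi'(q_p)}{(1+S_p)(1+S_{p+1})}.
\end{equation}

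It then remains to bound the two sides uniformly over finite-RSB $\zeta$. From Lemma~\ref{lm:recursion}, $0\le S_0=\sum_p\zeta(\{q_p\})(1-q_p)\xi'(q_p)\le\xi'(1)$, and since $S_{p+1}-S_p=\zeta([0,q_p])(\xi'(q_{p+1})-\xi'(q_p))\le\xi'(q_{p+1})-\xi'(q_p)$, telescoping gives $0\le S_p\le S_0+\xi'(q_*)\le 2\xi'(1)$ for all $p$, so $1\le 1+S_p\le 1+2\xi'(1)$ uniformly. Therefore each summand above is at most $\xi'(q_{p+1})-\xi'(q_p)$, the sum is at most $\xi'(q_*)-\xi'(q_0)\le\xi'(1)$, and
\begin{equation}
\mathbb{E}[(Z'(\zeta))^2] = a_r \le (1+2\xi'(1))^{2}\,\xi'(1) =: K,
\end{equation}
which is finite since $\beta_p\le p^{-2}$ forces $\xi'(1)=\sum_p p\beta_p^2<\infty$ (Assumption~\ref{asmp:Ham}) and does not depend on $\zeta$. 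The same telescoping applied to higher conditional moments — which are polynomials in $Z'_p$ with coefficients built from the bounded quantities $S_p$ and $\xi'(q_{p+1})-\xi'(q_p)$ — gives a uniform bound $\mathbb{E}[(Z'(\zeta))^{2m}]\le K_m$ for every $m$, the form invoked in Lemma~\ref{lm:weakcont}.

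The substance of the argument lies not in the Gaussian calculus but in two uniformity checks. First, that $G'_{p+1}$ is genuinely a Gaussian kernel with the stated mean and variance — this hinges entirely on the quadratic form of $X_p$ from Lemma~\ref{lm:recursion}, without which the recursion for $a_k$ would not close. Second, that the normalizing constants $1+S_p$ never degenerate to $0$ and never blow up along any sequence of finite-RSB measures; this is what upgrades the level-by-level estimate to a bound uniform in $\zeta$, and it is settled by the two-sided bound $0\le S_p\le 2\xi'(1)$.
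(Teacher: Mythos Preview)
Your proof is correct and follows essentially the same approach as the paper: both compute the conditional mean and variance of $z'_{p+1}$ given $Z'_p$ from the explicit quadratic form of $X_p$ in Lemma~\ref{lm:recursion}, arriving at the identical recursion $a_{p+1}=a_p\bigl(\tfrac{1+S_{p+1}}{1+S_p}\bigr)^2+\tfrac{(1+S_{p+1})(\xi'(q_{p+1})-\xi'(q_p))}{1+S_p}$. Where the paper then bounds this recursion inductively by $\prod_i(1+(B+1)(\xi'(q_{i+1})-\xi'(q_i)))^2\le e^{2(B+1)^2}$, you instead divide by $(1+S_{p+1})^2$ to telescope exactly and use the two-sided bound $1\le 1+S_p\le 1+2\xi'(1)$; this is a cleaner finish yielding a sharper explicit constant, but the substance of the argument is the same.
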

    \begin{proof}
    This will be the consequence of computing moment bounds, which is an exercise in computing the expectation level by level. We will demonstrate the computation for the second moment; we can bound higher moments by the same method.
    What we will show by direct integration is that $\mathbb{E}[(Z'_{k+1})^2] = C_k \mathbb{E}[(Z'_k)^2] + B_k$ for some constants $C_k$ and $B_k$ that will occur naturally in the course of the computation.
    \begin{equation}
    \mathbb{E}[(Z'_{k+1}(\zeta))^2] = \mathbb{E}[(Z'_k(\zeta))^2] +2 \mathbb{E}[Z'_k(\zeta) \mathbb{E}[z'_{k+1}(\zeta)|Z'_k(\zeta)]] + \mathbb{E}[\mathbb{E}[(z'_{k+1}(\zeta))^2|Z'_k(\zeta)]]
    \end{equation}
    We have computed the first moment in the previous equation \eqref{eq:1stmomentZ'}. We now compute the second moment.

    \begin{equation}
    \begin{aligned}
        &\mathbb{E}[(z'_{k+1}(\zeta))^2|Z'_k(\zeta)] = \frac{\int z^2 e^{\zeta([0,q_k]) \frac{(Z'_k(\zeta) +z)^2}{2[1+S_{k+1}(\zeta)]}} e^{-\frac{z^2}{2(\xi'(q_{k+1}) - \xi'(q_k))}} \td z}{\int  e^{\zeta([0,q_k]) \frac{(Z'_k(\zeta) +z)^2}{2[1+S_{k+1}(\zeta)]}} e^{-\frac{z^2}{2(\xi'(q_{k+1}) - \xi'(q_k))}} \td z} \\
        &=  \frac{(1+S_{k+1})(\xi'(q_{k+1}) - \xi'(q_k))}{(1+S_k)}+ \left(Z_k' \zeta([0,q_k]) \frac{\xi'(q_{k+1}) - \xi'(q_k)}{1+S_k}\right)^2
    \end{aligned}
    \end{equation}
Similarly, we can derive
\begin{equation}
\begin{aligned}
     &\mathbb{E}[(z'_{k+1}(\zeta))|Z'_k(\zeta)] = \frac{\int z e^{\zeta([0,q_k]) \frac{(Z'_k(\zeta) +z)^2}{2[1+S_{k+1}(\zeta)]}} e^{-\frac{z^2}{2(\xi'(q_{k+1}) - \xi'(q_k))}} \td z}{\int  e^{\zeta([0,q_k]) \frac{(Z'_k(\zeta) +z)^2}{2[1+S_{k+1}(\zeta)]}} e^{-\frac{z^2}{2(\xi'(q_{k+1}) - \xi'(q_k))}} \td z} \\
        &=   2 \left(Z_k' \zeta([0,q_k]) \frac{\xi'(q_{k+1}) - \xi'(q_k)}{1+S_k}\right)
\end{aligned}
\end{equation}
In summary, we thus, have the inductive relation
\begin{equation}\label{Eq:indrel}
    \mathbb{E}[(Z'_{k+1}(\zeta))^2] = \left(\frac{1+S_{k+1}}{1 + S_k}\right)^2\mathbb{E}[Z_k'(\zeta)^2] + \frac{(1+S_{k+1})(\xi'(q_{k+1}) - \xi'(q_k))}{(1+S_k)}
\end{equation}

With this equation \eqref{Eq:indrel} hand, we can easily prove by induction the following inequality:
\begin{equation}
    \mathbb{E}[Z_k'(\zeta)^2] \le \prod_{i=1}^k (1+ (B+1)(\xi'(q_{i+1}) -\xi'(q_i)))^2
\end{equation}
where $B= \max(\xi'(1),1)$.
We have that $\frac{1 + S_{k+1}}{1+S_k} \le 1 + (\xi'(q_{k+1}) -\xi'(q_k))$

  As such we see
  \begin{equation}
  \begin{aligned}
       &\prod_{i=1}^{k+1} (1+ (B+1)(\xi'(q_{i+1}) -\xi'(q_i)))^2  -\left(\frac{1+S_{k+1}}{1 + S_k}\right)^2\mathbb{E}[Z_k'(\zeta)^2]\\
       & \ge  \prod_{i=1}^{k+1} (1+ (B+1)(\xi'(q_{i+1}) -\xi'(q_i)))^2 - (1+ (\xi'(q_{i+1}) -\xi'(q_i)))^2  \prod_{i=1}^k (1+ (B+1)(\xi'(q_{i+1}) -\xi'(q_i)))^2\\
       & \ge 2B (\xi'(q_{i+1}) -\xi'(q_i)) \ge \frac{(1+S_{k+1})(\xi'(q_{k+1}) - \xi'(q_k))}{(1+S_k)}
  \end{aligned}
  \end{equation}
  To get to the third line, we used the fact that $\prod_{i=1}^k(1+ (B+1)(\xi'(q_{i+1}) -\xi'(q_i)))^2\ge 1 $ and we can bound $\frac{1 +S_{k+1}}{1+S_k} \le 1+ \xi'(q_{k+1}) - \xi'(q_k) \le 2B $.

  We see that the expression $\prod_{i=1}^k(1+(B+1)(\xi'(q_{k+1}) -\xi'(q_k))) \le e^{(B+1) \sum_{i=1}^k \xi'(q_{k+1}) - \xi'(q_k))} \le e^{(B+1)^2}$
    \end{proof}
\subsection{The Parisi PDE}
The properties of the random variables $Z'$ that we have defined are closely related to the solutions of an associated Parisi PDE at approximating measure $\zeta$ with support at points $0=q_0 \le q_1 \le \ldots \le q_r = q^* \le 1$. We use the notation $q_{r+1} = 1$ where appropriate.
\begin{equation} \label{Eq:ParisiPDE}
\partial_t u^{\zeta}(t,x)= - \frac{\xi''(x)}{2} (\partial_x^2 u^{\zeta}(t,x) + \zeta([0,x])(\partial_x u^{\zeta}(t,x))^2)
\end{equation}
with initial data given by
\begin{equation}
    u(1,x) = \frac{x^2}{2[1+S_\zeta]}
\end{equation}
with $S_\zeta = \sum_{i=1}^{r+1} [q_i \xi'(q_i) - q_{i-1} \xi'(q_{i-1})] \zeta([0,q_{i-1}])$.

At $\zeta$ satisfying finite replica symmetry breaking, as we have assumed here, we have the Cole-Hopf solution of the Parisi PDE. Namely, for $t$ in between $q_j \le t <q_{j+1}$, we have
\begin{equation}
    u(x,t) = \frac{1}{\zeta([0,q_j])} \log\mathbb{E}[\exp{\zeta([0,q_j]u(x +z(\xi'(q_{j+1}) - \xi'(q_j)),q_{j+1})}]
\end{equation}
where $z$ is a standard Gaussian random variable. What we observe is that the function $u(x,q_j)$ is exactly the function $X_j(x_1,\ldots,x_j)$ for any set $x_1+\ldots+x_j =x$ from  lemma \ref{lm:recursion}. We have explicitly performed these Gaussian computations explicitly and we can quite readily talk about convergence properties of solutions to the Parisi PDE.

\begin{lm} \label{lem:unifconv}
Consider a sequence of measures $\zeta_i$ on $([0,1]$ satisfying finite RSB. Suppose the set $\zeta_i$ has some weak limit $\zeta$ in the weak$*$ topology. Then on any compact region $C$ of $\mathbb{R} \times [0,1]$ we have uniform convergence of the solution of the Parisi PDE  \eqref{Eq:ParisiPDE} to the limit solutions.
\begin{equation}
    u^{\zeta_i}(x,t) \rightarrow u^{\zeta}(x,t)
\end{equation}
uniformly in $C$.
\end{lm}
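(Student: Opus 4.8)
The plan is to reduce the statement to a quantitative stability estimate for the Parisi PDE in its order parameter, together with precompactness of the family $\{u^{\zeta_i}\}$ on compact sets and uniqueness of the Parisi PDE solution for the (possibly non finite-RSB) limit $\zeta$. Fix the compact region and call it $K\subset\mathbb{R}\times[0,1]$. First I would record the two elementary consequences of weak$*$ convergence that get used. Writing $G(s)=\int_s^1 (l\xi''(l)+\xi'(l))\,\td l$, which is continuous on $[0,1]$, one has $S_{\zeta_i}=\int_0^1 G(s)\,\td\zeta_i(s)\to\int_0^1 G(s)\,\td\zeta(s)=S_\zeta$, so the terminal data $\tfrac{x^2}{2[1+S_{\zeta_i}]}$ converges to $\tfrac{x^2}{2[1+S_\zeta]}$ uniformly in $x$ on compacts. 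Moreover $\zeta_i([0,s])\to\zeta([0,s])$ for every $s$ outside the countable set of atoms of $\zeta$, and all these distribution functions are bounded by $1$, so by dominated convergence $\int_0^1 \xi''(s)\,|\zeta_i([0,s])-\zeta([0,s])|\,\td s\to0$.

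Next I would establish a priori bounds that are uniform in $i$. Using Lemma~\ref{lm:recursion}, at a support point $q_j$ of $\zeta_i$ one has $u^{\zeta_i}(q_j,x)=C_j+\tfrac{x^2}{2[1+S_j]}$ with $1+S_j\ge1+S_0\ge1$, and between consecutive support points the Cole--Hopf representation expresses $\partial_x u^{\zeta_i}(t,x)$ as a Gaussian average of $\partial_x u^{\zeta_i}\!\bigl(q_{j+1},\,x+z(\xi'(q_{j+1})-\xi'(q_j))\bigr)$; since $\partial_x u^{\zeta_i}(q_{j+1},\cdot)$ is linear with slope at most $1$ in absolute value, this yields $|\partial_x u^{\zeta_i}(t,x)|\le|x|+K_\xi$ on all of $[0,1]\times\mathbb{R}$ for a constant $K_\xi$ depending only on $\xi$, hence uniformly in $i$. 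Feeding this back into \eqref{Eq:ParisiPDE} bounds $|\partial_t u^{\zeta_i}|$ and the higher $x$-derivatives locally uniformly in $i$, so $\{u^{\zeta_i}\}$ is bounded and equicontinuous on $K$ and therefore precompact in $C(K)$ by Arzel\`a--Ascoli.

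It then remains to identify every subsequential limit, and here I would argue as in Lemma~3.3 of \cite{JaAuSpin} and in \cite{CheAuf1}. Let $v$ be the uniform limit on $K$ of a subsequence $u^{\zeta_{i_k}}$. Because $\zeta_{i_k}([0,x])\to\zeta([0,x])$ for a.e.\ $x$ under a uniform bound, the nonlinear term $\tfrac{\xi''(x)}{2}\zeta_{i_k}([0,x])\,p^2$ converges in a manner compatible with passing to the limit in the (classical on the smooth strips, viscosity globally) formulation of \eqref{Eq:ParisiPDE}; combined with the convergence of the terminal data from the first paragraph, this forces $v$ to be the Parisi PDE solution with order parameter $\zeta$ and terminal data $\tfrac{x^2}{2[1+S_\zeta]}$. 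By uniqueness of the Parisi PDE solution for an arbitrary order parameter, $v=u^\zeta$ on $K$, and since the limit does not depend on the subsequence the whole sequence $u^{\zeta_i}$ converges uniformly to $u^\zeta$ on $K$.

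I expect the main obstacle to be precisely this stability of the nonlinearity under only weak$*$ convergence: one must neutralize the contribution of the null set where $\zeta_i([0,\cdot])$ fails to converge pointwise, which is why it is essential that in all the estimates above the relevant quantities appear only under a time integral. An alternative, more hands-on route avoids PDE-stability language entirely: interpolating between $\zeta_i$ and $\zeta_j$ and differentiating the Cole--Hopf recursion layer by layer gives an estimate of the form
\[
\bigl\|u^{\zeta_i}-u^{\zeta_j}\bigr\|_{C(K)}\;\le\;\frac{K_\xi'}{2}\int_0^1 \xi''(s)\,\bigl|\zeta_i([0,s])-\zeta_j([0,s])\bigr|\,\td s\;+\;M_K\,\bigl|S_{\zeta_i}-S_{\zeta_j}\bigr|,
\]
with $K_\xi'$ absorbing the uniform gradient bound of the second paragraph and $M_K=\sup_{(x,t)\in K}\tfrac{x^2}{2}$; together with the first paragraph this makes $\{u^{\zeta_i}\}$ Cauchy in $C(K)$, and the limit is again identified with $u^\zeta$ by the uniqueness argument. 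The bookkeeping of the differing partition points of $\zeta_i$ and $\zeta_j$ is the only real nuisance in this second approach.
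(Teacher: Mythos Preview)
Your proposal is correct, but it is considerably more elaborate than what the paper has in mind. The paper does not write out a proof of this lemma; the sentence immediately preceding it (``We have explicitly performed these Gaussian computations explicitly and we can quite readily talk about convergence properties of solutions to the Parisi PDE'') points back to Lemma~\ref{lm:recursion}, which shows that for every finite-RSB $\zeta$ the Parisi solution is globally \emph{quadratic} in $x$: at each support point $u^\zeta(q_p,x)=C_p+\tfrac{x^2}{2[1+S_p]}$, and the same Cole--Hopf computation between support points keeps the solution inside this two-parameter family for every $t$. Unwinding the recursion $S_p=S_{p+1}-\zeta([0,q_p])(\xi'(q_{p+1})-\xi'(q_p))$ gives $1+S_\zeta(t)=1+S_\zeta-\int_t^1\zeta([0,s])\xi''(s)\,\td s$, and $C_\zeta(t)$ is a similar explicit integral. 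Both are manifestly weak$*$-continuous in $\zeta$ by exactly the dominated-convergence argument in your first paragraph, and uniform convergence on compacts in $(t,x)$ is then immediate from the explicit form; no compactness or uniqueness result is needed.

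Your two routes---Arzel\`a--Ascoli plus identification of subsequential limits via PDE stability and uniqueness, or the layer-by-layer Lipschitz estimate in the order parameter---are both sound and have the advantage of being robust: they would survive replacing the quadratic terminal datum $\tfrac{x^2}{2[1+S_\zeta]}$ by, say, $\log\cosh(x)$, which is precisely the Ising setting of \cite{JaAuSpin}. The paper's implicit argument, by contrast, exploits the algebraic accident that Gaussian integration against $e^{\alpha(x+z)^2}$ stays in the family $C+\tfrac{x^2}{2[1+S]}$, so the Parisi flow here never leaves a two-dimensional space of functions and the whole question reduces to continuity of two scalar functionals of $\zeta$.
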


\end{document}